\def\BibTeX{{\rm B\kern-.05em{\sc i\kern-.025em b}\kern-.08em
    T\kern-.1667em\lower.7ex\hbox{E}\kern-.125emX}}
\DeclareMathOperator*{\argmin}{arg\,min}
\newtheorem{theorem}{Theorem}
\newtheorem{corollary}[theorem] {Corollary}
\newtheorem{proposition}[theorem]{Proposition}
\newtheorem{lemma}[theorem]{Lemma}
\begin{document}
\title{Designing Robust Networks of Coupled Phase-Oscillators with Applications to the High Voltage Electric Grid}

\author{Shriya V. Nagpal $^{*}$, Gokul G. Nair,  Francesca Parise $^{\dagger}$, and C. Lindsay Anderson $^{\dagger}$, \IEEEmembership{Senior Member, IEEE}
\thanks{ $^{*}$ indicates the corresponding author and $^{\dagger}$ indicates equal contribution.}
\thanks{S. V. Nagpal and G. G. Nair are with the Center for Applied Mathematics, Cornell University, Ithaca, NY 14853 USA (e-mail: svn23@cornell.edu; gn234@cornell.edu ). }
\thanks{F. Parise is with the School of Electrical and Computer Engineering and the Center for Applied Mathematics, Cornell University, Ithaca, NY 14853 USA (e-mail: fp264@cornel.edu ).}
\thanks{C. L. Anderson is with the Department of Biological and Environmental Engineering and the Center for Applied Mathematics, Cornell University, Ithaca, NY 14853 USA (e-mail: cla28@cornell.edu).}}

\maketitle

\begin{abstract}
We propose a mathematical framework for designing robust networks of coupled phase-oscillators by leveraging a vulnerability measure proposed by Tyloo et. al that quantifies the impact of a small perturbation at an individual phase-oscillator's natural frequency blue to the system's global synchronized frequencies. Given a complex network topology with specific governing dynamics, the proposed framework finds an optimal allocation of edge weights that minimizes such vulnerability measure(s) at the node(s) for which we expect perturbations to occur by solving a tractable semi-definite programming problem. We specify the mathematical model to high voltage electric grids where each node corresponds to a voltage phase angle associated with a bus and edges correspond to transmission lines. Edge weights are determined by the susceptance values along the transmission lines. In this application, frequency synchronization is increasingly challenged by the integration of renewable energy, yet is imperative to the grid's health and functionality. Our framework helps to alleviate this challenge by optimizing the placement of renewable generation and the susceptance values along the transmission lines.
\end{abstract}


\begin{IEEEkeywords}
Coupled phase-oscillators, convex optimization, frequency synchronization, high voltage electric grid, robust network design, renewable energy, semidefinite programming
\end{IEEEkeywords}

\section{Introduction}
\label{sec:introduction}
\IEEEPARstart{C}{omplex } networks are frequently used to model coupled dynamical systems ranging from interacting molecules in chemical reactions~\cite{Filatrella} to high voltage electric grids~\cite{Tyloo2}. Be it man-made or natural, elements of a coupled dynamical system are represented by nodes in a complex network, and two nodes are adjacent to one another if the differential equations that govern those nodes, are dependent on one another~\cite{Tyloo2}. Two questions that are often investigated in complex networks are: 

\begin{enumerate}
    \item  What are the vulnerable nodes of the complex network?
    \item How can one use this knowledge to design robust complex networks?
\end{enumerate}

In this work, we seek to address the latter question for a complex network of coupled phase-oscillators. Specifically, we consider a weighted, connected, and undirected network, $G = (V,E)$, where $V$ is a set of $n$ nodes and $E$ is a set of $m$ edges. $B\in \mathbb{R}^{n \times n}$ is the weighted adjacency matrix specifying the edge weights of $G$; $B_{ij} = b_{ij}\geq{0}$ if $(i,j)\in{E}$ and $B_{ij}=0$ if $(i,j)\notin{E}$.\footnote{We allow for edge weights of the fixed network topology to be $0$ to facilitate an expansive proposed optimization framework.} Each node $i\in{V}$ in the network corresponds to an angle, $\theta_{i} \in[-\pi, \pi)$, that evolves according to the coupled dynamics

\begin{equation}\label{eqn:coupledoscillator1}
 \dot{\theta}_{i}=\omega_{i}-\sum_{{j\in\mathcal{N}(i)}} b_{i j} \sin \left(\theta_{i}-\theta_{j}\right), i=1, \ldots, n.
\end{equation}

\noindent
where $\mathcal{N}(i)$ is the set of all nodes, $j$, such that $(i,j)\in{E}$. The phase-oscillator's dynamics of node $i$ is determined by its natural frequency, $\omega_{i}$, and its coupling with other phase-oscillators determined by the network's edge weights $B$. Despite the apparent simplicity, the coupled phase-oscillator model and its variations have been utilized to describe and analyze a broad array of applications including circadian rhythms, flashing fireflies, and high voltage electric grids~\cite{Dorfler2}. In many of these phenomena, it is desirable for phase-oscillators to maintain global synchronized frequencies, i.e., $\dot{\theta}_{i} = \omega_0$ for all $i\in\{1,\dots,n\}$.

Following~\cite{Tyloo}, we measure vulnerability of a network by quantifying how much a small perturbation to a node's/phase-oscillator's natural frequency impacts the system's global synchronized frequencies. A small external perturbation at a node with high vulnerability has a larger influence on the global synchronized frequencies than nodes with a smaller vulnerability measure. Interestingly, in~\cite{Tyloo} the authors show that such a vulnerability measure may be written as a linear combination of generalized effective resistance measures. In the remainder of this section, we recap intuition into the derivation of this measure and then describe how to leverage this vulnerability measure for the purpose of designing robust systems.  

We apply the design framework to high voltage grids where each node $i$ corresponds to a voltage phase angle $\theta_{i} \in[-\pi, \pi)$, associated with a bus $i$, and evolves according to the coupled dynamics given in \eqref{eqn:coupledoscillator1},~\cite{Guo,Strogatz,Hu}. Here, the voltage phase-oscillators' ability to maintain synchronized frequencies is essential to the functionality of the grid. The current administration plans to have wind and solar energy comprise ninety percent of the United State's electricity profile by $2050$~\cite{Engel}, but this integration will likely result in small perturbations to the power injected into the system due to variability on renewable output \cite{Tyloo,Impram}, challenging the voltage phase-oscillators' capacity to maintain synchronized frequencies. This work seeks to address this tension by optimizing the placement of renewable generation and the susceptance values along the transmission lines to minimize the effect of disturbances on the voltage phase-oscillators' frequencies, in line with the proposed $2$ billion dollar government investment for clean energy infrastructure~\cite{Penrod}.

\textbf{Notation.} Let $\mathbb{R}$ denote the set of real numbers. We consider $b\in \mathbb{R}^{m}$ to be an $m$ length vector consisting of all $b_{ij}$ where $(i,j)\in{E}$ and $\mathbf{1}$ to be the all ones vector. The constraint $b\geq0$ is equivalent to enforcing that $b_{ij}\geq0$ where $(i,j)\in{E}$. $\mathbb{L}\in \mathbb{R}^{n \times n}$ is the network Laplacian matrix corresponding to $G = (V,E)$ where $\mathbb{L}_{i j}=-B_{i j}$ if $i \neq j$, and $\mathbb{L}_{i i} = \sum_{k} B_{i k}$, and $\mathbb{L}^{\dagger}\in \mathbb{R}^{n \times n}$ is the Moore–Penrose pseudoinverse of $\mathbb{L}$. Throughout the text we use $\mathbb{L}$ interchangeably with $\mathbb{L}(b)$ and $G$ interchangeably with $G(b)$ to remind the reader that properties of $\mathbb{L}$ and $G$ are dependent on the edge weights of the network. 
Similarly, suppose $\lambda_i$ is the $i^{th}$ eigenvalue of $\mathbb{L}$;  $\lambda_i(b)$ is used interchangeably with $\lambda_i$. Lastly, $Y\succ 0$ means that $Y$ is positive definite, while $Y\succeq{0}$ means that $Y$ is semi positive definite.
\color{black}



\subsection{Vulnerability Measure}
Let $\omega^{(0)}=\big[\omega^{(0)}_1,\dots,\omega^{(0)}_n\big]^{T}$ be a vector of natural frequencies. When natural frequencies, $\omega^{(0)}_i$ for all $i$, are not too large compared to their coupling parameters, stable solutions exist where phase-oscillators have global synchronized frequencies, i.e., $\dot{\theta}_{i} = \omega_0$ for all $i\in\{1,\dots,n\}$\footnote{This statement is made more precise later on in this write up.}~\cite{Tyloo3}. By working in a rotating reference frame, one may assume $\dot{\theta}_{i} = 0$ for all $i$, resulting in a stable fixed point, $\theta_{}^{(0)} = (\theta_{1}^{(0)},\dots,\theta_{n}^{(0)})$. Subjecting $\omega_{i}^{(0)}$ to a time dependent perturbation, $\omega_{i}(t)=\omega_{i}^{(0)}+\tilde{\omega}_{i}(t)$, results in phase angles becoming time-dependent, $\theta_{i}(t)=\theta_{i}^{(0)}+ \tilde{\theta}_{i}(t)$, and linearizing the dynamics around $\theta^{(0)}$ ultimately yields:

\begin{equation}\label{eqn:kuramoto2}
\dot{\tilde{\theta_i}} = \tilde{\omega}_i-\sum_{j\in\mathcal{N}(i)}b_{ij}\cos(\theta_i^{(0)}-\theta_j^{(0)})(\tilde{\theta_i}-\tilde{\theta_j}), i=1, \ldots, n. 
\end{equation}

Suppose $k$ is some node in the network. To determine the vulnerability measure of this node we set $\tilde{\omega}_{k}(t)$ to a time dependent, Ornstein-Uhlenbeck noise disturbance, and $\tilde\omega_s=0$ for all $s\neq k$. Let ${\tilde{\theta}}^{(k)}_{i}$ be a corresponding solution of \eqref{eqn:kuramoto2}.\footnote{We note that the  Ornstein-Uhlenbeck process can be considered as the continuous-time analogue of the discrete-time $AR(1)$ process.} We then define  

\begin{equation}\label{eqn:centrality_measure1}
\mathcal{M}_k(b) = \lim _{T \rightarrow \infty} T^{-1} \sum_{i} \int_{0}^{T} \overline{\left| \dot{\tilde{\theta}}^{(k)}_{i}(t)-\dot{\Delta}^{(k)}(t)\right|^{2}} \mathrm{~d} t 
\end{equation}

\noindent
where $\dot{\Delta}^{(k)}(t)=n^{-1} \Sigma_{j} \dot{\tilde{\theta}}^{(k)}_{j}(t)$, and the bar represents an average with respect to the random noise. Intuitively, this measure quantifies how much a specific perturbation at a node $k$ impacts the global angular-frequency synchronization; If the measure is small, the oscillators' frequencies remain synchronized, or at least close in value, throughout time when exposed to a small perturbation.\footnote{The authors in~\cite{Tyloo} do make an implicit assumption that the perturbation is small enough such that the dynamics remain within the basin of attraction.} While \eqref{eqn:centrality_measure1} depends on the solution of an ordinary differential equation, in~\cite{Tyloo} it is shown that $\mathcal{M}_k(b)$ can be expressed in terms of network properties only. Specifically, in~\cite{Tyloo}, the authors' derive an analytical expression for \eqref{eqn:centrality_measure1} and show that if the timescale of correlation in the noise is large in comparison to the dynamical system timescale, then 

\begin{equation}\label{eqn:centrality_measure}
    \mathcal{M}_k(b) = c_1 \Bigg(n^{-1}\sum_{j} \Omega_{j k}\left({\theta}^{(0)},b\right)-n^{-2}\sum_{i<j} \Omega_{i j}\left({\theta}^{(0)},b\right)\Bigg)
\end{equation}

\noindent
where $c_1$ is a fixed positive constant independent of the network, $b$ is a $m$ length vector that consists of all $b_{ij}$ where $(i,j)\in{E}$, and 

\[
    \Omega_{i j}\left({\theta}^{(0)},b\right)=\mathbb{L}_{i i}^{\dagger}\left({\theta}^{(0)}\right)+\mathbb{L}_{j j}^{\dagger}\left({\theta}^{(0)}\right)-2\mathbb{L}_{i j}^{\dagger}\left({\theta}^{(0)}\right)
\]

\noindent
is the effective resistance \cite{Tyloo} corresponding to the weighted network Laplacian matrix evaluated at a steady state, whose entries are given by
\[
    \mathbb{L}_{i j}\left({\theta}^{(0)}\right)=-B_{i j}\cos \left(\theta_{i}^{(0)}-\theta_{j}^{(0)}\right)
\]

\noindent
if $i \neq j$ and 
\[
    \mathbb{L}_{i i}\left({\theta}^{(0)}\right)= \sum_{k} {B_{i k}} \cos \left(\theta_{i}^{(0)}-\theta_{k}^{(0)}\right). 
\]

The same vulnerability measure may be derived by exposing oscillator $k$'s natural frequency to a temporary box noise perturbation, expanding the types of perturbations for which the vulnerability measure accounts for \cite{Tyloo3}.To further highlight the broad applicability of this measure, it is shown in \cite{Tyloo} that the vulnerability measure may also be considered for a network of coupled phase-oscillators with the following second-order dynamics

\begin{equation*}
 {m_i}\ddot{\theta}_{i} + {d_i}\dot{\theta}_{i}=\omega_{i}-\sum_{j\in\mathcal{N}(i)} b_{i j} \sin \left(\theta_{i}-\theta_{j}\right), 
\end{equation*}
\noindent
where $m_i = m_0$ and $d_i = d_0$ for all $i=1, \ldots, n$, and is additionally numerically justified for independently varying $m_i$ and $d_i$.
\color{black}

\subsection{Existing Literature and Contributions}

Various topological vulnerability measures have been discussed in the literature that quantify the ability of \textit{identical} oscillators (i.e. $\omega_{i}^{(0)}=\bar{\omega}$ for all $i$) to maintain synchronized frequencies in the presence of small perturbations~\cite{Fazlyab,Donetti,Tyloo3}. One such topological measure that has received considerable attention is the eigenratio of the network Laplacian, $Q = \lambda_{n} / \lambda_{2}$, where ${0}=\lambda_{1} < \lambda_{2}\leq \dots \leq \lambda_{n}$ are the eigenvalues of $\mathbb{L}$. Using the master stability framework proposed in Pecora et. al~\cite{Pecora}, it was shown that the interval in which the synchronized state is stable is larger for smaller $Q$~\cite{Donetti}. Following this work, many papers have leveraged this measure for the purpose of exploring and designing robust systems governed by \eqref{eqn:coupledoscillator1}~\cite{Kempton,Donetti,Hong,Pecora}. However, many of these design frameworks are rendered incompatible with existing applications because a number of these applications involve a network of \textit{nonidentical} coupled oscillators, like high voltage electric grids.

To remedy this drawback, this work considers \textit{nonidentical} coupled oscillator networks and seeks to \textit{design} robust oscillator networks capable of maintaining global synchronized frequencies in the presence of noise by considering the vulnerability measure \eqref{eqn:centrality_measure}. To this end, the main contribution of this paper is threefold. First, under a small phase angle difference assumption, the vulnerability measure \eqref{eqn:centrality_measure} can be written as a linear combination of effective resistance measures. This is particularly useful because a vast literature  exists on the effective resistance measure~\cite{Ghosh,Gharan,VanMieghem,ellens2011effective}, which we exploit to propose a mathematical model for designing robust networks of nonidentical coupled phase-oscillators that can be solved optimally and efficiently.  Second, in proposing this model, this work synthesizes two well studied bodies of work (work on networks of coupled phase-oscillators and work on the effective resistance measure) setting the stage for further interdisciplinary research of this type. Third, motivated by the need to integrate renewable energy into the grid given the growing threat of climate change, this work specifies the proposed mathematical model to the high voltage electric grid to facilitate the design of a system that is robust to the integration of renewable energy.

The rest of the paper is divided as follows. In Section~\ref{Problem Formulation}, the main mathematical problem is posed: Given a connected, complex network topology with governing dynamics described by~\eqref{eqn:coupledoscillator1}, the suggested framework seeks to find an optimal allocation of edge weights to minimize the vulnerability measures corresponding to a subset of nodes for which small perturbations are expected to occur. Section~\ref{Problem Formulation} provides sufficient conditions for the main problem to be convex and considers the worst case vulnerability measure for the purpose of designing a robust network of coupled phase-oscillators. Under this specification, the convex optimization problem is reformulated as a semidefinite programming problem (SDP) for the purpose of tractability. Section~\ref{Interpretations} provides further intuition into the optimization problem via the analysis of the vulnerability measure from both a graph theoretic and analytical perspective. Lastly, in Section~\ref{High Voltage Electric Grid Application}, the optimization framework is applied to the high voltage electric grid demonstrating not only the theoretical contribution of this work, but its practical utility as well.


\color{black}
\section{Problem Formulation}\label{Problem Formulation} 
Given a connected complex network topology with governing dynamics described by \eqref{eqn:coupledoscillator1}, this work's objective is to find an optimal allocation of edge weights to minimize the vulnerability measure at a node, or a function of vulnerability measures corresponding to a subset of nodes, for which we expect small perturbations to occur, subject to three constraints;The edge weights are non-negative, the edge weights sum to $1$,\footnote{This assumption is made for simplicity, but in reality, the edge weights may sum to any positive constant.} and the network remains connected. Specifically, the nodes for which we expect perturbations to occur is described by the subset $V'\subseteq{V}$, and in the application of interest, corresponds to buses where renewable energy is introduced. Edge weights correspond to susceptance values along the transmission lines and this work investigates how the distribution of susceptance values along the transmission lines of a high voltage electric grid topology facilitates robustness of that network. 

To start, we work under the assumption that the difference between phase angles of a steady state, $|\theta^{(0)}_i-\theta^{(0)}_j|$, is small for all $(i,j)\in{E}$ implying that $\cos(\theta^{(0)}_i-\theta^{(0)}_j)\approx{1}$.\footnote{Note that in the power grid, fixed points of the system are sought such that $|\theta^{(0)}_i-\theta^{(0)}_j|$ are small for all $(i,j)\in{E}$.} This means that for each $k\in V'$, $\frac{\mathcal{M}_k(b)}{c_1}\approx \hat{\mathcal{M}}_k(b) := n^{-1}\sum_{j} \Omega_{j k}(b)-n^{-2}\sum_{i<j} \Omega_{i j}(b)$ where $\Omega_{i j}(b)=\mathbb{L}_{i i}^{\dagger}+\mathbb{L}_{j j}^{\dagger}-2\mathbb{L}_{i j}^{\dagger}$ is the effective resistance corresponding to the network Laplacian, $\mathbb{L}_{i j}=-B_{i j}$ if $i \neq j$, and $\mathbb{L}_{i i} = \sum_{k} B_{i k}$. Suppose $V' = \{k^{1},\dots,k^{l}\}$ where $l\leq{n}$. We wish to find $b^{*}\in \mathbb{R}^{m}$ to minimize a function of the vulnerability measures of nodes in $V'$, $\mathcal{{F}}: \mathbb{R}^{l}\rightarrow \mathbb{R}$, that is

\begin{align}\label{eqn:main_problem}
b^{*} = \argmin_{b\in\mathcal{X}} \mathcal{{F}} \bigg( \hat{\mathcal{M}}_{k^1}(b),\dots,\hat{\mathcal{M}}_{k^l}(b)\bigg)
\end{align}

where,
\begin{align*}
    \mathcal{X}=\{b\in\mathbb{R}^m:b\geq 0,\ b^T\mathbf{1}=1, G(b)\text{ is connected}\}.
\end{align*}

In the following section, sufficient conditions on $\mathcal{{F}}$ are provided in order to describe when \eqref{eqn:main_problem} is a convex optimization problem. This work focuses on the case where
$$\mathcal{F}\bigg(\hat{\mathcal{M}}_{k^1}(b),\dots,\hat{\mathcal{M}}_{k^l}(b)\bigg) = \max_{k\in{V'}}\hat{\mathcal{M}}_k(b)$$
\noindent to produce edge weights that optimally minimize the worst case vulnerability measure of nodes in $V'$. With this specification of $\mathcal{F}$, \eqref{eqn:main_problem} is reformulated as an SDP problem for the purpose of employing efficient solvers, simultaneously ensuring that the resultant edge weight assignment is such that the oscillators' frequencies synchronize and have small phase angle differences.


\section{Optimization Framework}

\subsection{Convex Optimization Problem}\label{Convex Optimization Problem}

Proposition~\ref{prop:convexity_of_X} shows that the set $b\in\mathbb{R}^m$ such that $G(b)$ is connected is a convex set. From this proposition, it immediately follows that $\mathcal{X}$ is a convex set since the intersection of convex sets are convex \cite{Boyd}.

\begin{proposition}\label{prop:convexity_of_X}
$\{b\in\mathbb{R}^m: G(b)\text{ is connected}\}$ is a convex set.
\end{proposition}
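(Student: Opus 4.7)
The plan is to prove convexity directly: take two weight vectors $b_1,b_2$ for which $G(b_1)$ and $G(b_2)$ are connected, form the convex combination $b_\lambda = \lambda b_1 + (1-\lambda)b_2$ for $\lambda\in[0,1]$, and show that $G(b_\lambda)$ is connected. Throughout I will use the paper's standing assumption that entries of $b$ are non-negative (this is what makes convexity possible; without it two connected graphs could have cancelling edges under a convex combination, and the claim would fail).

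The key structural observation is that for a vector of non-negative edge weights $b$, the graph $G(b)$ is connected precisely when the spanning subgraph induced by the strictly positive coordinates of $b$, call this edge set $E^+(b):=\{(i,j)\in E:b_{ij}>0\}$, connects all of $V$. For $\lambda\in(0,1)$ and any edge $e\in E$,
\[
(b_\lambda)_e = \lambda (b_1)_e + (1-\lambda)(b_2)_e \ge 0,
\]
and the sum is strictly positive as soon as either summand is. Hence $E^+(b_\lambda)=E^+(b_1)\cup E^+(b_2)$, which contains the connected spanning edge set $E^+(b_1)$, so $G(b_\lambda)$ is connected. The endpoints $\lambda\in\{0,1\}$ are immediate.

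As a cross-check (and in case the paper prefers a spectral argument that plugs directly into later SDP reasoning), one can equivalently invoke the characterization that $G(b)$ is connected iff $\lambda_2(\mathbb{L}(b))>0$, together with concavity of $\lambda_2$ along the Laplacian cone: since $\mathbb{L}(b)$ is linear in $b$ and $\mathbf{1}$ lies in its kernel,
\[
\lambda_2(\mathbb{L}(b)) = \min_{\substack{x\perp \mathbf{1}\\ \|x\|=1}} x^\top \mathbb{L}(b)\, x
\]
is a pointwise minimum of linear functions of $b$, hence concave. Then $\lambda_2(\mathbb{L}(b_\lambda))\ge \lambda\,\lambda_2(\mathbb{L}(b_1))+(1-\lambda)\lambda_2(\mathbb{L}(b_2))>0$.

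There is no real obstacle here; the only subtlety worth flagging explicitly is the reliance on $b\ge 0$, since this is what prevents edge weights from cancelling in the convex combination and is what makes the inclusion $E^+(b_\lambda)\supseteq E^+(b_1)$ (or the concavity of $\lambda_2$) go through. I would present the combinatorial version as the main proof for its brevity and directness.
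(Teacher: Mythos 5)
Your main argument is essentially identical to the paper's proof: for $\lambda\in(0,1)$ the edge set of $G(\lambda b_1+(1-\lambda)b_2)$ is the union of the edge sets of $G(b_1)$ and $G(b_2)$ (using $b\ge 0$ so no cancellation occurs), hence connected, with the endpoints handled trivially. The explicit flagging of the non-negativity assumption and the spectral cross-check via concavity of $\lambda_2$ are nice additions, but the core reasoning matches the paper.
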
 

\medskip

\begin{proof}
Suppose $\sigma\in[0,1]$ and let $b_1,b_2\in\mathbb{R}^m$ be such that $G(b_1)$ and $G(b_2)$ are both connected. When $\sigma=0$, ${G(\sigma{b_1}+(1-\sigma)b_2)} = G(b_2)$, and $G(b_2)$ is connected by assumption. 
Similarly, when $\sigma=1$, ${G(\sigma{b_1}+(1-\sigma)b_2)} = G(b_1)$, and $G(b_1)$ is connected by assumption. Lastly, when $\sigma\in(0,1)$, ${G(\sigma{b_1}+(1-\sigma)b_2)}$ consists of all of the edges in $G(b_1)$ and $G(b_2)$, and therefore ${G(\sigma{b_1}+(1-\sigma)b_2)}$ is connected since both $G(b_1)$ and $G(b_2)$ are connected.
\end{proof}

\medskip

\noindent {Theorem~\ref{thm:convexity-of-M} shows that $\hat{\mathcal{M}}_k(b) $ is convex with respect to the edge weights, $b\in{\mathcal{X}}$. From this theorem, sufficient conditions on $\mathcal{{F}}$ are provided in Corollary~\ref{cor:convex-composition} for when \eqref{eqn:main_problem} is a convex optimization problem, and for which our choice of $\mathcal{{F}}$ satisfies.

\medskip

\begin{theorem}\label{thm:convexity-of-M}
Suppose $G=(V,E)$ is a simple, connected network with $|V| = n$  and $|E| = m$. Let $b_{ij}\geq0$ be the edge weight that corresponds to edge $(i,j)\in{E}$ and suppose $b$ is a $m$ length vector that consists of all $b_{ij}$. For each node $k\in{V}$, the measure $ \hat{\mathcal{M}}_k(b) $ is convex with respect to the edge weights $b\in{\mathcal{X}}$.
\end{theorem}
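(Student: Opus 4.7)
The plan is to first collapse the sum-of-effective-resistances expression defining $\hat{\mathcal{M}}_k(b)$ down to a single diagonal entry of the Laplacian pseudoinverse, and then deduce convexity of that entry by a standard Schur-complement argument.

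For any $b\in\mathcal{X}$, $G(b)$ is connected, so $\mathbb{L}(b)$ has one-dimensional null space equal to the span of $\mathbf{1}$, which yields $\mathbb{L}^{\dagger}(b)\mathbf{1}=0$; equivalently, $\sum_{j}\mathbb{L}^{\dagger}_{jk}=0$ for each $k$ and $\mathbf{1}^{T}\mathbb{L}^{\dagger}(b)\mathbf{1}=0$. Substituting these two identities into $\Omega_{ij}(b)=\mathbb{L}^{\dagger}_{ii}+\mathbb{L}^{\dagger}_{jj}-2\mathbb{L}^{\dagger}_{ij}$ gives
\[
\sum_{j}\Omega_{jk}(b)=n\,\mathbb{L}^{\dagger}_{kk}(b)+\tr\mathbb{L}^{\dagger}(b),\qquad \sum_{i<j}\Omega_{ij}(b)=n\,\tr\mathbb{L}^{\dagger}(b),
\]
so that the $\tr\mathbb{L}^{\dagger}$ contributions cancel in $n^{-1}\sum_j\Omega_{jk}-n^{-2}\sum_{i<j}\Omega_{ij}$, leaving $\hat{\mathcal{M}}_k(b)=\mathbb{L}^{\dagger}_{kk}(b)=e_{k}^{T}\mathbb{L}^{\dagger}(b)e_{k}$. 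This reduction is what I would expect to be the main obstacle: because the original linear combination has a negative coefficient on the all-pairs sum, one cannot simply invoke termwise convexity of $\Omega_{ij}(b)$, and without the identities $\mathbb{L}^{\dagger}(b)\mathbf{1}=0$ the expression does not obviously fit into a known convex form.

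Once the reduction is in hand, I would invoke the standard identity $\mathbb{L}^{\dagger}(b)=\bigl(\mathbb{L}(b)+\tfrac{1}{n}\mathbf{1}\mathbf{1}^{T}\bigr)^{-1}-\tfrac{1}{n}\mathbf{1}\mathbf{1}^{T}$, valid for any connected-graph Laplacian. Since $\mathbb{L}(b)=\sum_{(i,j)\in E}b_{ij}(e_i-e_j)(e_i-e_j)^{T}$, the matrix
\[
A(b):=\mathbb{L}(b)+\tfrac{1}{n}\mathbf{1}\mathbf{1}^{T}
\]
is affine in $b$ and positive definite on $\mathcal{X}$ (the rank-one term supplies the missing direction along $\mathbf{1}$, while leaving the orthogonal complement unchanged). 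Hence $\hat{\mathcal{M}}_k(b)=e_{k}^{T}A(b)^{-1}e_{k}-\tfrac{1}{n}$, and it suffices to show that $b\mapsto e_{k}^{T}A(b)^{-1}e_{k}$ is convex on $\mathcal{X}$.

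To finish, I would apply the Schur complement: for any $X\succ 0$ and $t\in\mathbb{R}$,
\[
e_{k}^{T}X^{-1}e_{k}\le t\;\;\Longleftrightarrow\;\;\begin{pmatrix} t & e_{k}^{T} \\ e_{k} & X \end{pmatrix}\succeq 0.
\]
Setting $X=A(b)$, the right-hand side is a linear matrix inequality in $(b,t)$, so the epigraph of $b\mapsto e_{k}^{T}A(b)^{-1}e_{k}$ is a convex subset of $\mathcal{X}\times\mathbb{R}$, and the function is convex. A constant shift by $-\tfrac{1}{n}$ preserves convexity, yielding the claim. As a bonus, this route produces a semidefinite-representable epigraph, which will later be useful for the SDP reformulation advertised in Section~\ref{Problem Formulation}.
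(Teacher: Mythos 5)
Your proof is correct and follows essentially the same route as the paper's: collapse $\hat{\mathcal{M}}_k(b)$ to $\mathbb{L}^{\dagger}_{kk}(b)=e_k^{T}\bigl(\mathbb{L}(b)+\mathbf{1}\mathbf{1}^{T}/n\bigr)^{-1}e_k-\tfrac{1}{n}$, then conclude convexity because the matrix-fractional function $c^{T}X^{-1}c$ is convex in $X\succ 0$ and $\mathbb{L}(b)+\mathbf{1}\mathbf{1}^{T}/n$ is affine in $b$. The only difference is that you prove inline the two facts the paper outsources to citations — the reduction to $\mathbb{L}^{\dagger}_{kk}$ via $\mathbb{L}^{\dagger}\mathbf{1}=0$ (cited to Van Mieghem) and the convexity of the matrix-fractional function via the Schur-complement epigraph (cited to Ghosh et al.) — both of which check out.
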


\begin{proof} Since $\mathbb{L}$ is a real symmetric matrix, it can be written as $\mathbb{L} = {V}\Lambda{V^{T}}$ where $V \in \mathbb{R}^{n \times n}$, $\Lambda \in \mathbb{R}^{n \times n}$ are such that:

\begin{itemize}
    \item $V^{T}V = VV^{T} = I$ 
    \vskip2mm
    \item $\Lambda=\left[\begin{array}{cccc}\lambda_{1} & 0 & \ldots & 0 \\ 0 & \lambda_{2} & \ldots & 0 \\ \vdots & 0 & \ddots & 0 \\ 0 & \ldots & 0 & \lambda_{n}\end{array}\right]$ 
\end{itemize}

\noindent
where ${0}=\lambda_{1} < \lambda_{2}\leq \lambda_{3} \dots \leq \lambda_{n}, \lambda_{i}\in \text{spec}(\mathbb{L})$. Moreover, the Moore–Penrose pseudoinverse of $\mathbb{L}$ may be written as $\mathbb{L}^{\dagger}=\left(\mathbb{L}+\mathbf{1 1}^{T} / n\right)^{-1}-\mathbf{1 1}^{T} / n$~\cite{Ghosh}, or $\mathbb{L}^{\dagger} = {V}\Lambda^{\dagger}{V^{T}}$, for

\begin{center}
   $\Lambda^{\dagger} =\left[\begin{array}{cccc}0 & 0 & \ldots & 0 \\ 0 & \frac{1}{\lambda_{2}} & \ldots & 0 \\ \vdots & 0 & \ddots & 0 \\ 0 & \ldots & 0 & \frac{1}{\lambda_{n}}\end{array}\right]$ .
\end{center}

\noindent
Now, for fixed $k$, consider the measure $\hat{\mathcal{M}}_k(b) =  n^{-1}\sum_{j} \Omega_{j k}(b)-n^{-2}\sum_{i<j} \Omega_{i j}(b)$. Recall that $e_k$ is a standard basis vector of length $n$, and let $v_\alpha$ denote the eigenvector associated with $\alpha^{th}$ eigenvalue of $\mathbb{L}$, i.e, the $\alpha^{th}$ column of $V$. Then,

\begin{align*}
    \hat{\mathcal{M}}_k(b) &=  n^{-1}\sum_{j} \Omega_{j k}(b)-n^{-2}\sum_{i<j} \Omega_{i j}(b)\\
    &\stackrel{*}{=} \Bigg(\sum_{\alpha \geq 2} \frac{v_{\alpha{k}}^{2}}{\lambda_{\alpha}}+n^{-2}\sum_{i<j} \Omega_{i j}(b)\Bigg) - n^{-2}\sum_{i<j} \Omega_{i j}(b)  \\
    &= \sum_{\alpha \geq 2} \frac{v_{\alpha{k}}^{2}}{\lambda_{\alpha}} = \mathbb{L}^{\dagger}_{kk} = {e_k}^{T}\mathbb{L}^{\dagger}{e_k} \\
    &= {e_k}^{T}\big(\left(\mathbb{L}+\mathbf{1 1}^{T} / n\right)^{-1}-\mathbf{1 1}^{T} / n\big){e_k} \\
    &= {e_k}^{T}\left(\mathbb{L} +\mathbf{1 1}^{T} / n\right)^{-1}{e_k} - {e_k^{T}}\big(\mathbf{1 1}^{T} / n\big){e_k}\\
    &={e_k}^{T}\left(\mathbb{L}+\mathbf{1 1}^{T} / n\right)^{-1}{e_k} - \frac{1}{n}.
\end{align*}

\noindent
For an explanation of the first equality (denoted with $*$) we refer the reader to \cite{VanMieghem}. From~\cite{Ghosh}, we observe that $f(Y)=c^{T} Y^{-1} c$, where $Y=Y^{T} \in \mathbb{R}^{n \times n}$ and $c \in \mathbb{R}^{n}$, is a convex function of $Y$ for $Y \succ 0$. Consequently, since $\left(\mathbb{L} +\mathbf{1 1}^{T} / n\right)^{-1} \succ 0$~\cite{Ghosh} and $\mathbb{L}+\mathbf{1 1}^{T} / n$ is an affine function of the edge weights of the graph $G$, ${e_k}^{T}\left(\mathbb{L}+\mathbf{1 1}^{T} / n\right)^{-1}{e_k}$ is a convex function of the edge weights of the graph. This means that ${e_k}^{T}\left(\mathbb{L} +\mathbf{1 1}^{T} / n\right)^{-1}{e_k} - \frac{1}{n}$ is a convex function of the edge weights of the graph since $\frac{1}{n}$ is simply a constant. 
\end{proof}
\medskip

\begin{corollary}\label{cor:convex-composition} 
If $\mathcal{{F}}: \mathbb{R}^{|V'|}\rightarrow \mathbb{R}$ is convex and nondecreasing, then $\mathcal{{F}} \bigg( \hat{\mathcal{M}}_{k^1}(b),\dots,\hat{\mathcal{M}}_{k^l}(b)\bigg)$ is convex with respect to $b$.
\end{corollary}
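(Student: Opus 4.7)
The plan is to apply the standard composition rule for convex functions (Boyd and Vandenberghe, Section 3.2.4): if the outer function is convex and nondecreasing in each of its arguments, and each inner function is convex, then the composition is convex. First I would invoke Theorem~\ref{thm:convexity-of-M} to establish that each of the $l$ component functions $b \mapsto \hat{\mathcal{M}}_{k^i}(b)$ is convex on $\mathcal{X}$, which supplies the inner convexity hypothesis.

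Second, I would carry out the short three-line verification by hand rather than citing the composition rule as a black box. Fix $b_1, b_2 \in \mathcal{X}$ and $\sigma \in [0,1]$; since $\mathcal{X}$ is convex (Proposition~\ref{prop:convexity_of_X} together with the affine simplex constraints $b \geq 0$, $b^T \mathbf{1} = 1$), the point $b_\sigma := \sigma b_1 + (1-\sigma) b_2$ lies in $\mathcal{X}$. Convexity of each $\hat{\mathcal{M}}_{k^i}$ gives, componentwise for $i = 1, \ldots, l$,
\[
\hat{\mathcal{M}}_{k^i}(b_\sigma) \;\leq\; \sigma\, \hat{\mathcal{M}}_{k^i}(b_1) + (1-\sigma)\, \hat{\mathcal{M}}_{k^i}(b_2).
\]
Because $\mathcal{F}$ is nondecreasing in each coordinate, applying it to both vector-valued sides preserves the inequality; convexity of $\mathcal{F}$ then lets me split the resulting upper bound as $\sigma \mathcal{F}(\hat{\mathcal{M}}_{k^1}(b_1), \ldots, \hat{\mathcal{M}}_{k^l}(b_1)) + (1-\sigma) \mathcal{F}(\hat{\mathcal{M}}_{k^1}(b_2), \ldots, \hat{\mathcal{M}}_{k^l}(b_2))$. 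Chaining the two steps yields precisely the convex-combination inequality needed for the composition to be convex in $b$.

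The argument is entirely mechanical once Theorem~\ref{thm:convexity-of-M} is in hand, so I do not anticipate a real obstacle; the only step requiring care is making sure the monotonicity hypothesis is used before the convexity of $\mathcal{F}$, since the vector inequality from the inner convexity step can only be transported through $\mathcal{F}$ when $\mathcal{F}$ is nondecreasing. For completeness, I would also note in passing that the specialization of interest, $\mathcal{F}(x_1, \ldots, x_l) = \max_{i} x_i$, satisfies both hypotheses: it is convex as the pointwise maximum of the coordinate projections (which are linear, hence convex), and it is nondecreasing in each $x_i$ because raising one coordinate can only weakly increase the maximum. This remark closes the loop with the modeling choice made just before the corollary and certifies that the worst-case version of \eqref{eqn:main_problem} is a convex program.
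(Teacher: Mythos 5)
Your argument is correct and is essentially the same as the paper's, which simply cites the vector composition rule from Boyd and Vandenberghe rather than writing out the verification; you have just unpacked that rule into its standard two-step proof (inner convexity componentwise, then monotonicity of $\mathcal{F}$ followed by convexity of $\mathcal{F}$), with the order of those two steps handled correctly. Your closing remark that $\max_i x_i$ is convex and nondecreasing matches the observation the paper makes immediately after the corollary.
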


\noindent
Corollary~\ref{cor:convex-composition} is proven in \cite{Boyd} and implies that $\max_{k\in{V'}}\hat{\mathcal{M}}_k(b)$ is convex with respect to the edge weights of $b$~\cite{Ghosh}.

\subsection{SDP Formulation}\label{SDP Formulation}
We now show how to reformulate problem \eqref{eqn:main_problem} (specified to this work's choice of $\mathcal{{F}}$) into an SDP problem so that efficient solvers may be employed. To start, a property of the objective function (Proposition~\ref{prop:lower-bound-on-M}) is proven for the purpose of incorporating the connectivity constraint in a way that is compatible with SDP formulations. In particular, the fact that $G(b)$ is connected if and only if the second smallest eigenvalue of the corresponding network Laplacian, $\lambda_2(b)$, is positive is leveraged. The constraint $\lambda_2(b)>0$ cannot directly be ensured in the SDP. We instead use $\lambda_2(b)>\epsilon$ and show that this is, without loss of generality, equivalent for sufficiently small $\epsilon$.

\begin{proposition}\label{prop:lower-bound-on-M}
For any $b\in{\mathcal{X}}$ and any ${V'}\subset{V}$, 
$$\max_{k\in{V'}}\hat{\mathcal{M}}_k(b)\geq \frac{1}{\lambda_2(b)} \left(1-\frac{1}{n}\right)^{2}.$$
\end{proposition}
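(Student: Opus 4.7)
The plan is to reduce the claim to a diagonal bound on $\mathbb{L}^\dagger$ and then use a Cauchy--Schwarz coupling between the spectral representations of $\mathbb{L}_{kk}$ and $\mathbb{L}^\dagger_{kk}$. From the proof of Theorem~\ref{thm:convexity-of-M} (at the equality flagged by $*$), we already have
$\hat{\mathcal{M}}_k(b)=\mathbb{L}^\dagger_{kk}=\sum_{\alpha\geq 2}v_{\alpha k}^2/\lambda_\alpha$,
so the task reduces to lower-bounding $\max_{k\in V} \mathbb{L}^\dagger_{kk}$ in terms of $\lambda_2(b)$ and then transferring this bound to the subset $V'$ via the problem-dependent choice of witness node.

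The central algebraic step is the following Cauchy--Schwarz inequality, applied for each $k\in V$ by splitting $v_{\alpha k}^2=(\sqrt{\lambda_\alpha}\,v_{\alpha k})(v_{\alpha k}/\sqrt{\lambda_\alpha})$ and summing over $\alpha\geq 2$:
\[
\Bigl(\sum_{\alpha\geq 2}v_{\alpha k}^2\Bigr)^{\!2} \leq \Bigl(\sum_{\alpha\geq 2}\lambda_\alpha v_{\alpha k}^2\Bigr)\Bigl(\sum_{\alpha\geq 2}v_{\alpha k}^2/\lambda_\alpha\Bigr) = \mathbb{L}_{kk}\,\mathbb{L}^\dagger_{kk}.
\]
Since $v_1=\mathbf{1}/\sqrt{n}$ gives $\sum_{\alpha\geq 2}v_{\alpha k}^2 = 1-1/n$, the left-hand side equals $(1-1/n)^2$, so $\mathbb{L}^\dagger_{kk}\geq (1-1/n)^2/\mathbb{L}_{kk}$ for every $k\in V$.

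It then remains to produce a witness $k^\star$ for which $\mathbb{L}_{k^\star k^\star}\leq \lambda_2(b)$; combining this with the Cauchy--Schwarz step gives $\max_{k}\mathbb{L}^\dagger_{kk}\geq \mathbb{L}^\dagger_{k^\star k^\star}\geq (1-1/n)^2/\lambda_2$, as claimed. The natural candidate is the node of smallest weighted degree, and the upper bound $\mathbb{L}_{k^\star k^\star}\leq\lambda_2(b)$ should be extracted from the Rayleigh characterization $\lambda_2 = \min_{v\perp\mathbf{1},\,\|v\|=1} v^\top \mathbb{L} v$ by plugging in a carefully chosen test vector (for instance $e_i-e_j$ for an appropriate pair of non-adjacent minimum-degree nodes, or a localized unit vector based on $k^\star$), together with the normalization constraint $b^\top\mathbf{1}=1$ that forces $\tr(\mathbb{L})=2$ and hence $\min_k \mathbb{L}_{kk}\leq 2/n$.

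The main obstacle will be this last step. Fiedler's classical inequality only provides the reverse bound $\mathbb{L}_{kk}\geq (1-1/n)\lambda_2$, which goes the wrong way, so selecting the test vector for the Rayleigh quotient requires care; an alternative, weaker-but-sufficient route for the SDP argument that follows is to average the Cauchy--Schwarz inequality over $k$ and use $\max_k\mathbb{L}^\dagger_{kk}\geq \tr(\mathbb{L}^\dagger)/n\geq 1/(n\lambda_2)$, since what the subsequent SDP reformulation ultimately needs is only that the objective diverges as $\lambda_2(b)\to 0$.
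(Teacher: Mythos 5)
Your Cauchy--Schwarz step is correct and is in fact a clean, self-contained derivation of the inequality $\mathbb{L}^{\dagger}_{kk}\,\mathbb{L}_{kk}\geq\left(1-\tfrac{1}{n}\right)^{2}$ that the paper simply imports from \cite{VanMieghem}; up to that point you and the paper are on the same track. The gap is exactly where you flag it, and it is not a missing trick: a witness $k^{\star}$ with $\mathbb{L}_{k^{\star}k^{\star}}\leq\lambda_2(b)$ need not exist (take two uniformly weighted cliques joined by a single light edge --- $\lambda_2$ becomes arbitrarily small while every weighted degree stays bounded away from zero), and indeed the inequality you are trying to prove is false as stated. For a singleton $V'$, the center $k$ of the uniformly weighted star on $n$ nodes has $\hat{\mathcal{M}}_k(b)=\left(1-\tfrac{1}{n}\right)^{2}$ while the right-hand side is $(n-1)\left(1-\tfrac{1}{n}\right)^{2}$; even for $V'=V$, the uniformly weighted cycle on $10$ nodes gives $\max_k\hat{\mathcal{M}}_k(b)=99/12=8.25$ against a right-hand side of $(0.9)^2/\lambda_2\approx 21.2$. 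So the step you could not complete cannot be completed.

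For comparison, the paper's own proof takes the same first inequality, invokes $\lambda_2\geq 1/(nD)$ together with $\mathbb{L}_{kk},D\leq 1$, and chains $1/\mathbb{L}_{kk}\geq 1\geq 1/(n\lambda_2)$; even granting those cited bounds, this only yields $\hat{\mathcal{M}}_k(b)\geq\left(1-\tfrac{1}{n}\right)^{2}/(n\lambda_2)$, and the final line silently drops the factor of $n$ (the cycle example above also violates the intermediate claim $n\lambda_2\geq 1$). What is salvageable, and all that the subsequent SDP argument actually requires, is a bound of the form $\mathrm{const}/(n\lambda_2)$ that forces the objective to diverge as $\lambda_2(b)\to 0$. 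Your fallback $\max_{k\in V}\mathbb{L}^{\dagger}_{kk}\geq\tr(\mathbb{L}^{\dagger})/n\geq 1/(n\lambda_2)$ is a correct and citation-free proof of exactly that weakened statement when $V'=V$; be aware, though, that it does not cover an arbitrary subset $V'$ (the proposition is quantified over all $V'\subset V$, and a bound on the maximum over all of $V$ says nothing about the maximum over, say, a single low-vulnerability node), so a per-node lower bound would still be needed to justify the choice of $\epsilon$ for general $V'$.
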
 

\medskip

\noindent

For a proof of Proposition~\ref{prop:lower-bound-on-M}, please see Section~\ref{sec:proof-of-prop} of the Appendix.

\medskip

\noindent
By Proposition~\ref{prop:lower-bound-on-M}, there exists an $\epsilon>0$ such that for all $b$ where $0<\lambda_2(b)<\epsilon$, $\max_{k\in{V'}}\hat{\mathcal{M}}_k(b)>\max_{k\in{V'}}\hat{\mathcal{M}}_k(b^{*})$. Leveraging this $\epsilon$ and results from Theorem~\ref{thm:convexity-of-M}, \eqref{eqn:main_problem} (specified to this work's choice of $\mathcal{{F}}$) may be written as:

\begin{align}\label{eqn:main_problem_specified}
\argmin_{b\in\mathcal{X}} \max_{k\in{V'}}  {e_k}^{T}\left(\mathbb{L}+\mathbf{1 1}^{T} / n\right)^{-1}{e_k}
\end{align}

where,
\begin{align*}
    \mathcal{X}=\{b\in\mathbb{R}^m:b\geq 0,\ b^T\mathbf{1}=1, \mathcal{E}\succeq{0}\}.
\end{align*}

Note that the constraint $\mathcal{E} = \mathbb{L}+\mathbf{1 1}^{T} / n - \epsilon{I}\succeq{0}$ guarantees that all of the eigenvalues of $\mathbb{L}+\mathbf{1 1}^{T} / n$ are greater than or equal to $\epsilon$, and in particular, $\lambda_2(b)\geq\epsilon>0$. Next, set a slack variable $t$ such that

$$
    {e_k}^{T}\left(\mathbb{L}+\mathbf{1 1}^{T} / n\right)^{-1}{e_k}\leq t \text{ for all } k\in{V'}.
$$ 
By Schur's complement, ${e_k}^{T}\left(\mathbb{L}+\mathbf{1 1}^{T} / n\right)^{-1}{e_k}\leq t$ for all $k\in{V'}$ if and only if

$$
S_k :=
\left[\begin{array}{ll}
\mathbb{L}+\mathbf{1 1}^{T}/n & e_{k} \\
e^{T}_{k} & t
\end{array}\right]{\succeq{0}} \text{ for all } k.
$$

\noindent
Hence, problem \eqref{eqn:main_problem_specified}, may be written as 

\begin{align*}
\argmin_{b\in\mathcal{X}, S_k\succeq{0}\,\forall k\in V'} t
\end{align*}

Construct the block diagonal matrix $Z\in \mathbb{R}^{d\times{d}}$ where $d = |V^{'}|(n+1)+m+n$ as:

\begin{center}
$Z=\left[\begin{array}{ccccccccc}
S_1&&&&&&&&  \\ 
&S_2 &&&&&  \\
&&\ddots &&&&& \\ 
&&&S_{|V'|} &&&  \\
&&&&b_1 && \\
&&&&&b_2 &   \\
&&&&&&\ddots\\
&&&&&&&b_m\\
&&&&&&&&\mathcal{E}\\
\end{array}\right]$.
\end{center}

Define $W\in \mathbb{R}^{d\times{d}}$ such that $W_{ij} = 1$ if and only if $i = j = n+1$, and $W_{ij} = 0$ otherwise. Define $A\in \mathbb{R}^{d\times{d}}$ such that $A_{ij} = 1$ if and only if

$$i\in\{(n+1)|V'|+1,\dots,(n+1)|V'|+m\}$$

\noindent
and $i=j$, and $A_{ij} = 0$ otherwise. Then, coalescing everything together, results in the following SDP formulation of problem \eqref{eqn:main_problem_specified} :

\begin{align}\label{eqn:SDP_formulation}
 \argmin_{b} &\operatorname{Tr}(WZ)\\
 \text{s.t. }&\operatorname{Tr}(AZ)=1,\nonumber\\
 &Z\succeq{0}\nonumber.
\end{align}
\noindent
To solve \eqref{eqn:SDP_formulation} for $b^{*}$, we utilize CVXPY, a Python-embedded modeling language for convex optimization problems~\cite{CVXPY}. 

\subsection{Ensuring Synchronicity}\label{Ensuring Synchronicity}
In the derivation of the vulnerability measure, we assumed that the oscillators have synchronized frequencies and small phase angle differences. This assumption is not always true and is dependent on the relationship between the oscillators' natural frequencies and the edge weights of the complex network. The work~\cite{Bullo} delineates a condition relating complex networks oscillators' natural frequencies and edge weights that ensures the existence of a stable synchronized solution with phase angle differences less than a given small angle parameter, $\gamma$. Specifically, the authors show that if the second smallest eigenvalue, $\lambda_2(b)$ of $\mathbb{L}$ satisfies

    \begin{equation}\label{eqn:constraint3}
        \lambda_2(b)\geq{\|\omega^{(0)}\|_{{E}, \infty} \cdot \sin (\gamma)},
    \end{equation}

\noindent
where $\|x\|_{{E}, \infty}=\max _{\{i, j\} \in {E}}\left|x_{i}-x_{j}\right|$, then $\lvert\theta^{(0)}_i-\theta^{(0)}_j\rvert\leq\gamma$ for a wide class of networks. Thus, to ensure that a synchronized stable solution with angle difference less than a chosen, small angle parameter $\gamma$ exists,~\eqref{eqn:constraint3} is incorporated as a constraint into the optimization framework by further tightening the already existing constraint, $\mathcal{E}\succeq{0}$. That is, set $\epsilon = \|\omega^{(0)}\|_{{E}, \infty} \cdot \sin (\gamma)$, and note that $\mathcal{E}\succeq{0}$ is a sufficient condition for~\eqref{eqn:constraint3}.

\section{Interpretations}\label{Interpretations}

To start to shed light on the edge weight assignments that result from the optimization framework, \eqref{eqn:main_problem_specified}, specified to this work's choice of $\mathcal{F}$, this section investigates how to optimally assign edge weights to minimize vulnerability of a specific node, $k$, 
\begin{align}\label{eqn:main_problem_one_node}
\argmin_{b\in\mathcal{X}} \hat{\mathcal{M}}_{k}(b).
\end{align}

\noindent Section~\ref{Graph Theoretic Analysis} reformulates the vulnerability measure in terms of expected commute time, a graph theoretic measure. This reformulation facilitates intuition into how to optimally assign edge weights to solve~\eqref{eqn:main_problem_one_node}
and how this edge weight assignment permits a system of coupled phase-oscillators to be robust to small perturbations at node $k$. In Section~\ref{Analytical Solutions for Canonical Graphs}, a sufficient condition for the optimality of $b\in{\mathcal{X}}$ with respect to \eqref{eqn:main_problem_one_node} is derived and used to justify a prescribed edge weight assignment as optimal for a complete graph and any tree network of size $n$. 

\color{black}

\subsection{Graph Theoretic Analysis}\label{Graph Theoretic Analysis}
Let $D$ be the weighted degree matrix of $G$ where $D_{ij}=\sum_{j=1}^{n} B_{i j}$ for $i = j$ and $0$ otherwise. Define a discrete-time transition probability matrix $P = D^{-1}B$. Such a transition probability matrix defines a random walk on the weighted graph $G$ in which a random walker at node $i$ has probability $P_{i j}$ of visiting node $j$ in the next time step. The \textit{expected hitting time}, $H_{ij}$, is the expected number of steps such a random walker takes to reach node $j$ for the first time, starting from node $i$. The \textit{expected commute time} is the expected number of steps a random walker takes to reach node $j$, starting at node $i$, and then return to node $i$; $C_{ij} = H_{ij} + H_{ji}$ \cite{VanMieghem}. The following well-known result relates expected commute times and effective resistance \cite{Ghosh}:

$$
 C_{i j}=2\left(\mathbf{1}^{T} b\right) \Omega_{i j}(b) = 2 \Omega_{i j}(b)
$$

\noindent
since  $\mathbf{1}^{T} b = 1$. By multiplying both sides of 
$$\hat{\mathcal{M}}_k(b) =  n^{-1}\sum_{j} \Omega_{j k}(b)-n^{-2}\sum_{i<j} \Omega_{i j}(b)$$

\noindent
by $n^{2}$, we have 

$$n^2\hat{\mathcal{M}}_k(b) = \frac{1}{2}(n-1)\sum_{j} {C_{j k}}-\frac{1}{2}\sum_{\substack{i<j \\ i,j \neq {k}}} {C_{i j}}.$$

\noindent
Thus, $2n^{2}\hat{\mathcal{M}}_k(b) = (n-1)\sum_{j} {C_{j k}}-\sum_{\substack{i<j \\ i,j \neq {k}}} {C_{i j}}$ and so minimizing the vulnerability measure $\hat{\mathcal{M}}_k(b)$ at a node $k$ according to the optimization framework amounts to an edge weight assignment that seeks to (1) minimize the expected commute time from node $k$ to any other node in the system, and/or (2) maximize the expected commute time between any two nodes in the system that are not $k$. Note that (1) and (2) have different influences on an optimal edge weight assignment based on the \textit{network structure considered} and \textit{the node for which the vulnerability measure is being minimized at}. Figure~\ref{fig1} explores the influences of (1) and (2) on an optimal edge weight assignment for a few canonical graphs.}

\begin{figure}[H]
\centerline{\includegraphics[width=\columnwidth]{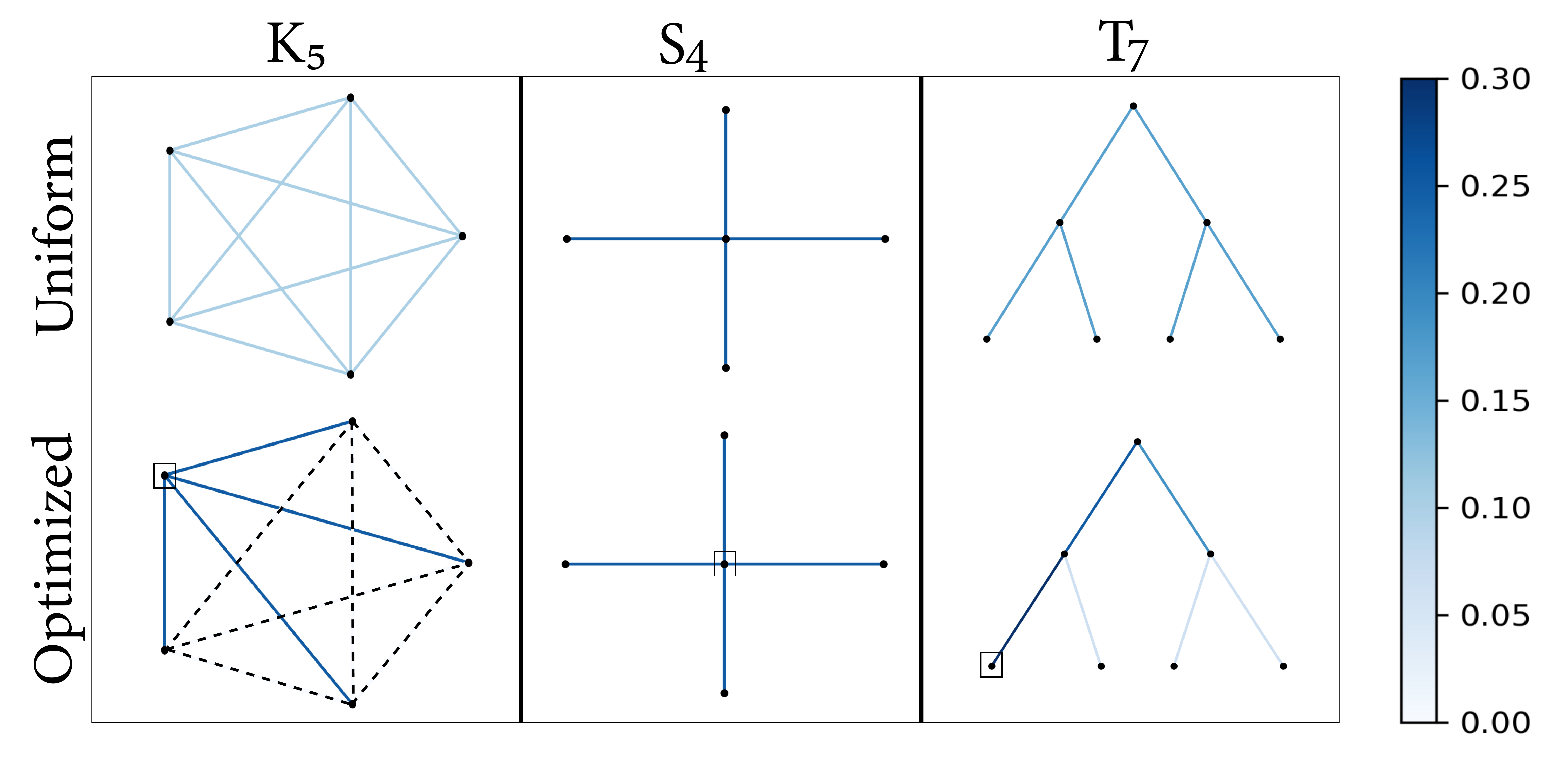}}
\caption{We consider a complete graph ($K_5$), a star graph ($S_4$), and a tree ($T_7$). Each graph in the top row has the same edge weight for each edge such that all edges sum to one, and in the bottom row, we apply this optimization framework to exactly one node, node $k$, in the graph indicated by the black square. The vulnerability measure at each node, $k$, after the optimization framework either decreases or stays the same. If edges are dotted, this means that the edge weight is $0$ at that edge.}
\label{fig1}
\end{figure}


In order to achieve the optimal vulnerability measure for node $k$ in the scenario ($K_5$), edge weights are assigned such that the sum of the expected commute time from node $k$ to any other node in the system decreases and the sum of the expected commute time between any two nodes in the system that are not $k$ increases. In ($S_4$), the edge weights, and therefore commute times, do not change. In ($T_7$), the sum of expected commute time from node $k$ to any other node in the system decreases, and as a result, the sum of the expected commute time between any two nodes in the system that are not $k$ decreases in order to achieve an optimal vulnerability measure at the chosen node $k$. 

To give insight on how assigning edge weights such that this graph theoretic/commute time interpretation is satisfied permits a system of oscillators' whose synchronized frequencies are robust to perturbations via simulation we consider a complete graph on five nodes with  uniform edge weights and synchronized frequencies and expose a specific oscillator's/node's natural frequency to a temporary box noise perturbation. Then, we consider the same complete graph on five nodes with edge weights obtained from the optimization framework and synchronized frequencies and expose the same node's natural frequency to the same temporary box noise perturbation. For both cases, we plot in Figure~\ref{fig2} the oscillators' frequencies over time for the purpose of comparison.

\begin{figure}
\centerline{\includegraphics[width=\columnwidth]{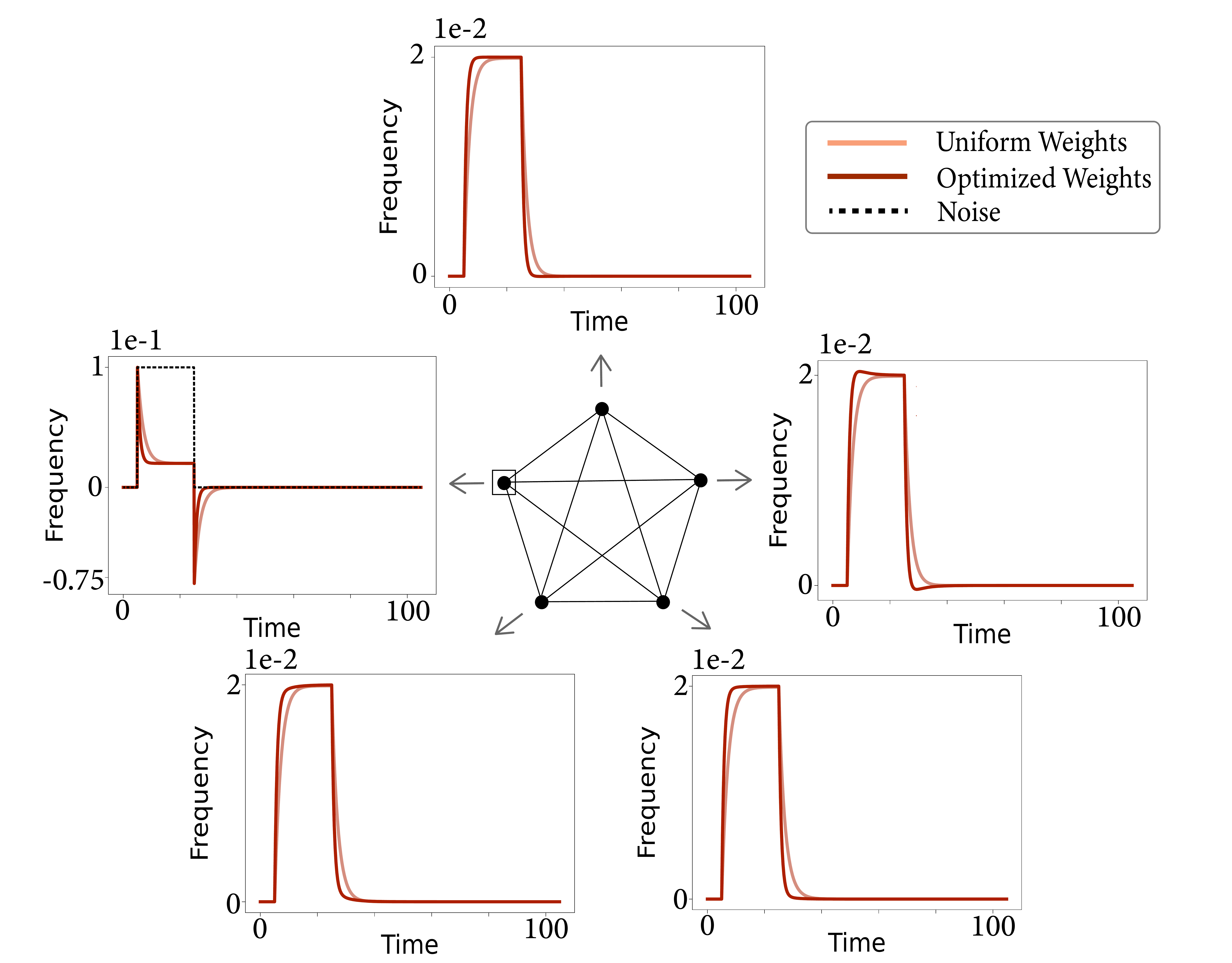}}
\caption{The node that is squared on the complete graph on five nodes in the center of this image is the node with perturbed natural frequency for both the uniform edge weight case ($b_0$) and optimized edge weight case ($b^{*}$). Each node on the graph has an associated arrow which points to a plot where the oscillators' frequencies over time for both cases, uniform and optimized edge weights, are considered. For each of these plots, we consider time (seconds) on the $x$-axis and frequency (in a co-rotating frame) on the $y$-axis. Notably, $\mathcal{M}_k(b_0)$ is about $2.4$ times larger than $\mathcal{M}_k(b^{*})$ when considering node $k$ indicated in Figure~\ref{fig1}}
\label{fig2}
\end{figure}

It is interesting to note that for all five nodes, the oscillators' frequencies behavior in the uniform edge weight case and optimized edge weights case are qualitatively similar except for one caveat; The oscillators frequencies' behavior in the uniform edge weights case seems to slightly lag the oscillators frequencies' behavior in the optimized edge weights case. We observe this qualitative behavior when performing the same type of simulation on $T_7$ and a figure elucidating this can be seen in Section~\ref{Graph Theoretic Interpretation Continued} of the Appendix.

This nicely connects the graph theoretic interpretation to the vulnerability measure \eqref{eqn:centrality_measure1} definition, $\mathcal{M}_k(b)$, which is how this work defines robustness in a network of coupled phase-oscillators.\color{black}\footnote{Here, recall that vulnerability measure \eqref{eqn:centrality_measure1} is derived in \cite{Tyloo3} for box noise perturbations.} In assigning edge weights such that the commute time interpretation is satisfied we are permitting perturbations introduced at node $k$ to have more of an influence and, therefore, propagate to other nodes at a quicker speed. This allows for oscillators' frequencies to synchronize at a quicker rate, or at least remain close in value, throughout time when a node is exposed to a small perturbation which is in line with the definition of the vulnerability measure \eqref{eqn:centrality_measure1}. \color{black}We observe this same qualitative behavior when node $k$'s natural frequency is exposed to an Ornstein-Uhlenbeck noise disturbance, however, the box noise perturbation lends itself to a sharper illustration.


\subsection{Analytical Formulations for Canonical Graphs}\label{Analytical Solutions for Canonical Graphs}

\begin{lemma} Let $b^{*}\in{\mathcal{X}}$. If for all $l\in\{1,\dots{m}\}$
\begin{equation}\label{eqn:optimality-condition}
    \frac{\partial\hat{\mathcal{M}}_k(b^{*})}{\partial b^{}_{l}} + \hat{\mathcal{M}_k}^{}(b^{*})\geq 0\tag{C1}
\end{equation}
then $b^{*}$ is an optimal solution to \eqref{eqn:main_problem_one_node}.
\end{lemma}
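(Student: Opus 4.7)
The plan is to exploit the convexity of $\hat{\mathcal{M}}_k$ established in Theorem~\ref{thm:convexity-of-M}, which reduces global optimality over the convex set $\mathcal{X}$ to the first-order variational inequality: $b^{*}\in\mathcal{X}$ is optimal for \eqref{eqn:main_problem_one_node} if and only if
\[
 \nabla \hat{\mathcal{M}}_k(b^{*})^{T}(b-b^{*}) \geq 0 \qquad \text{for all } b\in\mathcal{X}.
\]
So the job reduces to showing that \eqref{eqn:optimality-condition} implies this inequality for every feasible $b$.

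The key structural observation I would use is that $\hat{\mathcal{M}}_k$ is positively homogeneous of degree $-1$ in $b$. Indeed, $\mathbb{L}(b)$ is linear in $b$, so scaling $b\mapsto cb$ with $c>0$ scales every nonzero eigenvalue of $\mathbb{L}$ by $c$ while keeping the eigenvectors fixed; using the closed form $\hat{\mathcal{M}}_k(b)=\sum_{\alpha\geq 2} v_{\alpha k}^{2}/\lambda_{\alpha}(b)$ from the proof of Theorem~\ref{thm:convexity-of-M}, one obtains $\hat{\mathcal{M}}_k(cb)=c^{-1}\hat{\mathcal{M}}_k(b)$. Euler's identity for homogeneous functions then gives
\[
 \sum_{l=1}^{m} b^{*}_{l}\,\frac{\partial\hat{\mathcal{M}}_k(b^{*})}{\partial b_{l}} \;=\; -\hat{\mathcal{M}}_k(b^{*}).
\]

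With this in hand, for any $b\in\mathcal{X}$ I would expand
\[
 \nabla \hat{\mathcal{M}}_k(b^{*})^{T}(b-b^{*}) \;=\; \sum_{l} b_{l}\,\frac{\partial\hat{\mathcal{M}}_k(b^{*})}{\partial b_{l}} \;+\; \hat{\mathcal{M}}_k(b^{*}),
\]
using Euler's identity for the second term. Since $b\geq 0$ and $b^{T}\mathbf{1}=1$, hypothesis \eqref{eqn:optimality-condition} rewritten as $\partial\hat{\mathcal{M}}_k(b^{*})/\partial b_{l}\geq -\hat{\mathcal{M}}_k(b^{*})$ yields
\[
 \sum_{l} b_{l}\,\frac{\partial\hat{\mathcal{M}}_k(b^{*})}{\partial b_{l}} \;\geq\; -\hat{\mathcal{M}}_k(b^{*})\sum_{l} b_{l} \;=\; -\hat{\mathcal{M}}_k(b^{*}),
\]
so the full variational inequality is $\geq 0$, and convexity closes the argument.

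I expect the main obstacle to be two technical checks rather than a conceptual one: first, justifying differentiability of $\hat{\mathcal{M}}_k$ at every feasible point (which follows because connectivity of $G(b)$ makes $\mathbb{L}(b)+\mathbf{1}\mathbf{1}^{T}/n$ positive definite, so the resolvent form ${e_k}^{T}(\mathbb{L}+\mathbf{1}\mathbf{1}^{T}/n)^{-1}e_k-1/n$ is smooth in $b$); and second, verifying that the direction $b-b^{*}$ ranges over a set rich enough to give the standard first-order characterization, which is immediate since $\mathcal{X}$ is convex by Proposition~\ref{prop:convexity_of_X}. The homogeneity identity is the real engine of the proof and the only nonroutine ingredient.
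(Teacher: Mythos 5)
Your proposal is correct and follows essentially the same route as the paper: both rest on the degree $-1$ homogeneity of $\hat{\mathcal{M}}_k$ and the resulting Euler identity $\left(\nabla\hat{\mathcal{M}}_k(b^{*})\right)^{T}b^{*}=-\hat{\mathcal{M}}_k(b^{*})$ (the paper isolates this as Lemma~\ref{gradient_of_measure_property}), and both then verify the first-order variational inequality over $\mathcal{X}$ and invoke convexity via the minimum principle. The only cosmetic difference is that the paper first establishes $\left(\nabla\hat{\mathcal{M}}_k(b^{*})\right)^{T}(e_l-b^{*})\geq 0$ at each standard basis vector and takes convex combinations, whereas you sum the weighted hypothesis directly; these are the same computation.
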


\begin{proof} 
We start by proving in Lemma~\ref{gradient_of_measure_property} in Section~\ref{Analytical Solutions for Canonical Continued} of the Appendix that

$$\left(\nabla \hat{\mathcal{M}}_k^{}(b)\right)^{T} b=-\hat{\mathcal{M}}_k^{}(b)$$ 
\noindent
for all $b\in\mathcal{X}$. Leveraging this property and (\ref{eqn:optimality-condition}), for all $l\in\{1,\dots{m}\}$,

\begin{align*}
\frac{\partial\hat{\mathcal{M}}_k(b^{*})}{\partial b^{}_{l}} &+ \hat{\mathcal{M}}_k^{}(b^{*})\geq 0\\
&\implies \frac{\partial \hat{\mathcal{M}}_k\big(b^{*}\big)}{\partial b^{}_{l}} - (-\hat{\mathcal{M}}_k(b^{*}))\geq 0\\
&\implies \left(\nabla \hat{\mathcal{M}}_k(b^{*})\right)^{T} e_{l} -\left(\nabla \hat{\mathcal{M}}_k(b^{*})\right)^{T} b^{*} \geq 0\\
&\implies \left(\nabla \hat{\mathcal{M}}_k(b^{*})\right)^{T} (e_{l}-b^{*})\geq 0
\end{align*}

\noindent
where $e_l$ is a standard basis vector of length $m$. Suppose $\bar{b}\in\mathcal{X}$ and note that we may write

$$\bar{b} = \sum_{i=1}^{m}{c_i}{e_i}$$ 

\noindent where $c_i\geq{0}$ and $\sum_{i=1}^{m}{c_i}=1$, since $\sum_{i=1}^{m}{\bar{b}_i}=1$. Since $\left(\nabla \hat{\mathcal{M}}_k(b^{*})\right)^{T} (e_{l}-b^{*})\geq 0$,

\begin{align*}
&\sum_{i=1}^{m}c_i\left(\nabla \hat{\mathcal{M}}_k(b^{*})\right)^{T} (e_{i}-b^{*})\geq 0\\
&\implies \left(\nabla \hat{\mathcal{M}}_k(b^{*})\right)^{T}\sum_{i=1}^{m}c_i{e_{i}}-\left(\nabla \hat{\mathcal{M}}_k(b^{*})\right)^{T}\sum_{i=1}^{m}c_i{b^{*}}\geq 0\\
&\implies \left(\nabla \hat{\mathcal{M}}_k(b^{*})\right)^{T}\bar{b}-\left(\nabla \hat{\mathcal{M}}_k(b^{*})\right)^{T}{b^{*}}\geq 0\\
&\implies \left(\nabla \hat{\mathcal{M}}_k(b^{*})\right)^{T}\left(\bar{b}-{b^{*}}\right)\geq 0.
\end{align*}

Thus, $\left(\nabla \hat{\mathcal{M}}(b^{*})\right)^{T}\left(\bar{b}-{b^{*}}\right)\geq 0$ for all $\bar{b}\in\mathcal{X}$. By the minimum principle, $b^{*}$ is an optimal solution to the convex problem \eqref{eqn:main_problem_one_node}. 
\end{proof}

\medskip
In the rest of this section, the sufficient condition (\ref{eqn:optimality-condition}) is used to justify the optimality of prespecified edge weight assignments for complete and tree graphs. 

\medskip
\begin{theorem}\label{edge_weight_complete_graph}
Suppose $G=(V,E)$ is a complete graph on $n$ nodes and fix $k\in{V}$. An optimal edge weight assignment that solves \eqref{eqn:main_problem_one_node} is a star centered at node $k$ with uniform edge weight distribution. 
\end{theorem}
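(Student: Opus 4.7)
The plan is to produce the candidate solution explicitly and then verify the sufficient optimality condition (\ref{eqn:optimality-condition}) from the preceding lemma. Let $b^{*}\in\mathbb{R}^{m}$ assign weight $1/(n-1)$ to each of the $n-1$ edges incident to $k$ and weight $0$ to every other edge of $K_{n}$. Clearly $b^{*}\geq 0$, $(b^{*})^{T}\mathbf{1}=1$, and $G(b^{*})$ is the star $S_{n-1}$ centered at $k$, which is connected, so $b^{*}\in\mathcal{X}$. The goal is to show that for every edge $l=(i,j)\in E(K_{n})$,
\[
    \frac{\partial \hat{\mathcal{M}}_k(b^{*})}{\partial b_{l}}+\hat{\mathcal{M}}_k(b^{*})\geq 0,
\]
so that the lemma yields optimality.

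First I will evaluate $\hat{\mathcal{M}}_k(b^{*})$ directly from its definition using effective resistances on the star. By symmetry and the series law, $\Omega_{jk}(b^{*})=n-1$ for every leaf $j$, and $\Omega_{ij}(b^{*})=2(n-1)$ for every pair of distinct leaves $i,j$. Substituting into $\hat{\mathcal{M}}_k(b^{*})=n^{-1}\sum_{j}\Omega_{jk}-n^{-2}\sum_{i<j}\Omega_{ij}$ collapses, after counting the leaf-leaf and center-leaf pairs separately, to the closed form $\hat{\mathcal{M}}_k(b^{*})=(n-1)^{2}/n^{2}$.

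Next I will compute the partial derivatives using the formula $\hat{\mathcal{M}}_k(b)=e_{k}^{T}(\mathbb{L}+\mathbf{1}\mathbf{1}^{T}/n)^{-1}e_{k}-1/n$ from Theorem~\ref{thm:convexity-of-M}. Differentiating $(\mathbb{L}+\mathbf{1}\mathbf{1}^{T}/n)^{-1}$ along the rank-one perturbation $\partial\mathbb{L}/\partial b_{l}=(e_{i}-e_{j})(e_{i}-e_{j})^{T}$ and exploiting $(e_{i}-e_{j})^{T}\mathbf{1}=0$, I obtain
\[
    \frac{\partial \hat{\mathcal{M}}_k(b)}{\partial b_{l}} = -\bigl(\mathbb{L}^{\dagger}_{ik}-\mathbb{L}^{\dagger}_{jk}\bigr)^{2}, \qquad l=(i,j).
\]
Evaluating this at $b^{*}$ requires the entries $\mathbb{L}^{\dagger}_{\cdot k}$ of the pseudoinverse of the star Laplacian. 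By the row-sum property $\mathbb{L}^{\dagger}\mathbf{1}=0$ together with the symmetry between the $n-1$ leaves, all of $\mathbb{L}^{\dagger}_{ik}$ for leaves $i$ coincide and therefore equal $-\mathbb{L}^{\dagger}_{kk}/(n-1)$; combined with $\mathbb{L}^{\dagger}_{kk}=\hat{\mathcal{M}}_k(b^{*})=(n-1)^{2}/n^{2}$, this gives $\mathbb{L}^{\dagger}_{ik}=-(n-1)/n^{2}$ for every leaf $i$.

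Finally I will plug these values into condition (\ref{eqn:optimality-condition}) in two cases. If $l=(k,i)$ is incident to $k$, then $(\mathbb{L}^{\dagger}_{kk}-\mathbb{L}^{\dagger}_{ik})^{2}=(n-1)^{2}/n^{2}$, so the partial derivative equals $-(n-1)^{2}/n^{2}=-\hat{\mathcal{M}}_k(b^{*})$, and (\ref{eqn:optimality-condition}) holds with equality. If $l=(i,j)$ joins two leaves, then $\mathbb{L}^{\dagger}_{ik}=\mathbb{L}^{\dagger}_{jk}$ by symmetry, the partial derivative vanishes, and (\ref{eqn:optimality-condition}) holds strictly as $(n-1)^{2}/n^{2}\geq 0$. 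In both cases the sufficient condition is satisfied for every edge, and the lemma concludes that $b^{*}$ solves~\eqref{eqn:main_problem_one_node}. The only technical obstacle is the pseudoinverse computation for the star, which I expect to dispatch cleanly using symmetry and $\mathbb{L}^{\dagger}\mathbf{1}=0$ rather than inverting any matrix explicitly.
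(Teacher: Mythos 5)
Your proposal is correct, and it follows the paper's high-level strategy exactly: take the uniform star centered at $k$ as the candidate, verify the sufficient condition (\ref{eqn:optimality-condition}) edge by edge, and split into the two cases of edges incident to $k$ versus edges joining two leaves. Where you differ is in the verification machinery. The paper rewrites (\ref{eqn:optimality-condition}) as the spectral inequality \eqref{eqn:optimality-condition_eigenvalues} and then substitutes the explicit eigenvectors and eigenvalues of $\left(\mathbb{L}(\bar b)+\mathbf{1}\mathbf{1}^{T}/n\right)^{-1}$ for the uniformly weighted star. You instead reduce the gradient to $-\bigl(\mathbb{L}^{\dagger}_{ik}-\mathbb{L}^{\dagger}_{jk}\bigr)^{2}$ (which is the same quantity the paper writes in bilinear form, since $(e_i-e_j)^{T}\mathbf{1}=0$ kills the rank-one correction), evaluate $\hat{\mathcal{M}}_k(b^{*})=\mathbb{L}^{\dagger}_{kk}=(n-1)^{2}/n^{2}$ via series effective resistances on the star, and recover the off-diagonal entries $\mathbb{L}^{\dagger}_{ik}=-(n-1)/n^{2}$ from $\mathbb{L}^{\dagger}\mathbf{1}=0$ and leaf-permutation symmetry. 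All of these computations check out (in particular $\mathbb{L}^{\dagger}_{kk}-\mathbb{L}^{\dagger}_{ik}=(n-1)/n$, giving equality in (\ref{eqn:optimality-condition}) on star edges and a strict inequality on leaf--leaf edges, matching the paper's Cases 1 and 2). What your route buys is the avoidance of any eigendecomposition — everything follows from the resistance formula and two elementary properties of the pseudoinverse — at the cost of being slightly more specific to the star's symmetry; the paper's spectral computation is more mechanical but generalizes more directly to candidates whose spectrum is known.
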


\begin{proof}
Without loss of generality, set $k = 1$. We show that edge weight assignment $\bar{b}^{}$, where $\bar{b}^{}_l=\frac{1}{n-1}$ for all $j$ such that $l=(1,j)\in{E}$ and $\bar{b}^{}_l = 0$ otherwise, is optimal by showing that (\ref{eqn:optimality-condition}) holds for each edge $l$. Using gradient derivations presented in \cite{Ghosh} and the fact that $$\hat{\mathcal{M}}_1^{}\big(b\big)= {e_1}^{T}\left(\mathbb{L}(b)+\mathbf{1 1}^{T} / n\right)^{-1}{e_1} - \frac{1}{n}$$ 

\noindent
we have that $\frac{\partial \hat{\mathcal{M}}_1^{}\big(b^{}\big)}{\partial b^{}_{l}}$ where $l=(i,j)\in{E}$, equals

\[
    -{e_1}^{T}\left[\mathbb{L}(b)+\frac{\mathbf{1 1}^{T}}{n}\right]^{-1}\!\!\!\!\!\!{(e_i-e_j)}{(e_i-e_j)^{T}}\left[\mathbb{L}(b)+\frac{\mathbf{1 1}^{T}}{n}\right]^{-1}\!\!\!\!\!\!e_1, 
\]

\noindent for any $b\in{\mathcal{X}}$. Note that (\ref{eqn:optimality-condition}) where $k = 1$ holds true if and only if $-\frac{\partial\hat{\mathcal{M}}_1(b^{*})}{\partial b^{}_{l}} \leq \hat{\mathcal{M}_1}^{}(b^{*})$ for each $l=(i,j)$. Thus, (\ref{eqn:optimality-condition}) where $k = 1$ holds true if and only if
    \begin{align}\label{eqn:optimality-condition_eigenvalues}
    &\left(\sum_{\alpha={1}}^{n} v^{*}_{\alpha{1}}(v^{*}_{\alpha{i}}-v^{*}_{\alpha{j}})\lambda^{*}_{\alpha}\right)^{2}\leq \sum_{\alpha={1}}^{n} (v^{*}_{\alpha{1}})^{2}\lambda^{*}_{\alpha} - \frac{1}{n}
    \end{align}
 \noindent for each $l=(i,j)$, where $v^{*}_\alpha$ denotes the normalized eigenvector \footnote{That is, the two-norm of each eigenvector equals $1$.} associated with $\alpha^{th}$ eigenvalue of $\left(\mathbb{L}(b^{*})+\mathbf{1 1}^{T} / n\right)^{-1}$, $\lambda^{*}_{\alpha}$. By our edge weight assignment and properties of the spectrum of the Laplacian of a uniformly weighted star graph, the eigenvectors of $\left(\mathbb{L}(\bar{b})+\mathbf{1 1}^{T}/ n\right)^{-1}$ are 
 \begin{itemize}
     \item $\bar{v}_1 = \left[\frac{1}{\sqrt{n}},\dots,\frac{1}{\sqrt{n}}\right]^{T}$
     \medskip
     \item $\bar{v}_s = \frac{1}{\sqrt{2}}(e_s-e_{s+1}),$ for all $2\leq{s}\leq{n-1}$, and
     \medskip
     \item $\bar{v}_n = \left[\frac{n-1}{\sqrt{(n)(n-1)}},\frac{-1}{\sqrt{(n)(n-1)}},\dots, \frac{-1}{\sqrt{(n)(n-1)}}\right]^{T}$
 \end{itemize}
 \medskip
 \noindent with corresponding eigenvalues $\bar{\lambda}_1 = 1,\bar{\lambda}_s = n-1$ for $2\leq{s}\leq{n-1}$, and $\bar{\lambda}_n = \frac{n-1}{n}$. We now consider two cases.

 \medskip

\textbf{Case 1: }Suppose $i=1$ and $j\in\{2,\dots,{n}\}$ and consider $\sum_{\alpha={1}}^{n} \bar{v}_{\alpha{1}}(\bar{v}_{\alpha{1}}-\bar{v}_{\alpha{j}})\bar{\lambda}_{\alpha}$. Since $\bar{v}_{11},\bar{v}_{1j} = \frac{1}{\sqrt{n}}$, $\bar{v}_{{11}}(\bar{v}_{{11}}-\bar{v}_{{1j}})\bar{\lambda}_{1} = 0$. Moreover, since $\bar{v}_{\alpha{1}} = 0$ for all $2\leq{\alpha}\leq{n-1}$, 
     \begin{align*}
         \sum_{\alpha={1}}^{n} \bar{v}_{\alpha{1}}(\bar{v}_{\alpha{1}}-\bar{v}_{\alpha{j}})\bar{\lambda}_{\alpha}
         = \bar{v}_{{n1}}(\bar{v}_{{n1}}-\bar{v}_{{nj}})\bar{\lambda}_{n}
         =\frac{n-1}{n}.
     \end{align*}
     
     Thus, $\bigg(\sum_{\alpha={1}}^{n} \bar{v}_{\alpha{1}}(\bar{v}_{\alpha{1}}-\bar{v}_{\alpha{j}})\bar{\lambda}_{\alpha}\bigg)^{2} = \left(\frac{n-1}{n}\right)^{2}$, and 
     \begin{align*}
        \sum_{\alpha={1}}^{n} \bar{v}^{2}_{\alpha{1}}\bar{\lambda}_{\alpha} - \frac{1}{n}
        &= \left(\frac{n-1}{n}\right)^{2},
     \end{align*}
     
     \noindent implying that \eqref{eqn:optimality-condition_eigenvalues} is met. 

\medskip
 
\textbf{Case 2: }Suppose $i,j\in\{2,\dots,{n}\}$ and consider $\sum_{\alpha={1}}^{n} \bar{v}_{\alpha{1}}(\bar{v}_{\alpha{i}}-\bar{v}_{\alpha{j}})\bar{\lambda}_{\alpha}$. For this case, since $\bar{v}_{1{i}}=\bar{v}_{1{j}}$, $\bar{v}_{\alpha{1}}=0$ for ${2}\leq\alpha\leq{n-1}$, and $\bar{v}_{n{i}}=\bar{v}_{n{j}}$, $\sum_{\alpha={1}}^{n} \bar{v}_{\alpha{1}}(\bar{v}_{\alpha{i}}-\bar{v}_{\alpha{j}})\bar{\lambda}_{\alpha} = 0.$ Clearly,
$$0<\left(\frac{n-1}{n}\right)^{2}=\sum_{\alpha={1}}^{n} \bar{v}^{2}_{\alpha{1}}\bar{\lambda}_{\alpha} - \frac{1}{n},$$
\noindent
thus \eqref{eqn:optimality-condition_eigenvalues} is met. By \textbf{Case 1} and \textbf{Case 2}, (\ref{eqn:optimality-condition}) holds true for each edge $l$, and therefore, $\bar{b}=b^{*}$.
\end{proof}
\medskip

\begin{theorem}\label{edge_weight_tree}
Suppose $G=(V,E)$ is a tree with $|V| = n$ and $|E|= m$ and $k\in{V}.$ Let $\mathcal{A}$ be the set of all simple paths in $G$ and $\mathcal{A}^{k}$ be the set of all simple paths from node $k$ to all other nodes in $G$, then 
$$
\bar{b}^{}_l = \frac{(na^{k}_l-a_l)^{\frac{1}{2}}}{\sum_{s=1}^{m} (na^{k}_s-a_s)^\frac{1}{2}},
$$
\noindent for each edge $l\in\{1,\dots,m\}$ is an optimal edge weight assignment that solves \eqref{eqn:main_problem_one_node}, where $a_l$ ($a^{k}_l$) is the number of times edge $l$ appears in $\mathcal{A}$ ($\mathcal{A}^{k}$) . 
\end{theorem}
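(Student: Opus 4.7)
The plan is to verify the sufficient optimality condition~(\ref{eqn:optimality-condition}) for the proposed weights $\bar{b}$; by the preceding lemma this immediately yields optimality for~\eqref{eqn:main_problem_one_node}. The key structural simplification is that on a tree the effective resistance collapses to a single path sum, $\Omega_{ij}(b)=\sum_{l\in\pi(i,j)}1/b_l$, where $\pi(i,j)$ denotes the unique simple path from $i$ to $j$. Swapping the order of summation in the defining expression of $\hat{\mathcal{M}}_k$ and recognizing that each edge $l$ appears in exactly $a_l^k$ of the paths $\{\pi(j,k)\}_j$ and in exactly $a_l$ of the paths $\{\pi(i,j)\}_{i<j}$ produces the closed form
\[
\hat{\mathcal{M}}_k(b)=\frac{1}{n^2}\sum_{l=1}^m \frac{c_l}{b_l},\qquad c_l:=na_l^k-a_l,
\]
whose partial derivatives are $\partial\hat{\mathcal{M}}_k/\partial b_l = -c_l/(n^2 b_l^2)$.

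Second, I would record the combinatorial identity $c_l=n_l^2$, where $n_l$ is the number of vertices of $G\setminus\{l\}$ on the side not containing $k$. Removing edge $l$ splits the tree into pieces of sizes $|A|$ and $|B|=n_l$ with $k\in A$, giving $a_l=|A||B|$ and $a_l^k=|B|$, so that $c_l=n|B|-|A||B|=|B|^2>0$. In particular the candidate $\bar{b}_l=\sqrt{c_l}/S$ with $S:=\sum_s\sqrt{c_s}$ is well defined, positive on every edge, sums to one, and leaves $G(\bar{b})$ connected (it retains all original tree edges), so $\bar{b}\in\mathcal{X}$.

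Substituting $\bar{b}_l=\sqrt{c_l}/S$ into the closed-form expressions then gives
\[
\hat{\mathcal{M}}_k(\bar{b})=\frac{1}{n^2}\sum_l \frac{c_l S}{\sqrt{c_l}}=\frac{S^2}{n^2},\qquad \frac{\partial\hat{\mathcal{M}}_k(\bar{b})}{\partial b_l}=-\frac{c_l S^2}{n^2 c_l}=-\frac{S^2}{n^2},
\]
so condition~(\ref{eqn:optimality-condition}) is satisfied with equality for every $l$, and the preceding lemma certifies $\bar{b}$ as an optimum.

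The main obstacle I anticipate is the bookkeeping on $a_l$ and $a_l^k$: the two sums in $\hat{\mathcal{M}}_k$ use different index conventions ($\sum_j$ runs over all $j$, while $\sum_{i<j}$ is over unordered pairs), and a factor-of-two slip would destroy the identity $c_l=|B|^2\ge 0$ on which the whole argument rests. Once the counting is correctly aligned, the verification of~(\ref{eqn:optimality-condition}) is essentially forced: the Cauchy--Schwarz-flavored choice $\bar{b}_l\propto\sqrt{c_l}$ is designed precisely to equalize $c_l/\bar{b}_l^2$ across edges to the common value $S^2$, collapsing the check to a one-line algebraic identity.
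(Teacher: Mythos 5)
Your proposal is correct and follows essentially the same route as the paper: reduce $\hat{\mathcal{M}}_k$ on a tree to $\frac{1}{n^2}\sum_l (na_l^k-a_l)/b_l$ via the path-sum form of effective resistance, then check that $\bar{b}_l\propto\sqrt{na_l^k-a_l}$ makes condition~(\ref{eqn:optimality-condition}) hold with equality. Your additional observation that $na_l^k-a_l=|B|^2$ (the squared size of the component of $G\setminus\{l\}$ not containing $k$) is a nice touch the paper omits, since it justifies that the square roots are real and the candidate weights are positive.
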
 

\begin{proof}
We show that the edge weight assignment

$$
\bar{b}^{}_l = \frac{(na^{k}_l-a_l)^{\frac{1}{2}}}{\sum_{s=1}^{m} (na^{k}_s-a_s)^\frac{1}{2}},
$$

\noindent
for each edge $l\in\{1,\dots,m\}$ is an optimal solution to \eqref{eqn:main_problem_one_node}. Since $G=(V,E)$ is a tree, for all $(i,j)\in{E}$ and $b\in{\mathcal{X}}$, $\Omega_{ij}(b)$ is the sum of the reciprocal of edge weights that lie on the unique path from node $i$ to node $j$ \cite{Ghosh}. Thus, 

\begin{align*}
n^{2}\hat{\mathcal{M}}_k^{}\big(b\big) &= n^{}\sum_{j} \Omega_{j k}(b)-\sum_{i<j} \Omega_{i j}(b)\\
&= n\sum_{l=1}^{m} \frac{a^{k}_l}{b_l} - \sum_{l=1}^{m} \frac{a_l}{b_l} \\
&= \sum_{l=1}^{m} \frac{{n{a^{k}_l}}-a_l}{b_l}. 
\end{align*}

\noindent Note that  

\begin{align*}
    \frac{\partial{n^{2}}\hat{\mathcal{M}}_k(b^{})}{\partial b^{}_{l}} + {n^{2}}\hat{\mathcal{M}}_k^{}(b^{})
    = \frac{-(na_l^{k}-a_l)}{(b^{}_l)^{2}} + \sum_{s=1}^{m} \frac{{n{a^{k}_s}}-a_s}{b_s}.
\end{align*}

\noindent
It can be verified that by plugging in $\bar{b}$ as defined above we obtain,
$$\frac{-(na_l^{k}-a_l)}{(\bar{b}^{}_l)^{2}} = - \left(\sum_{s=1}^{m} \left({n{a^{k}_s}}-a_s\right)^{\frac{1}{2}}\right)^{2},\text{ and}$$
$$\sum_{s=1}^{m} \frac{{n{a^{k}_s}}-a_s}{\bar{b}_s} = \left(\sum_{s=1}^{m} \left({n{a^{k}_s}}-a_s\right)^{\frac{1}{2}}\right)^{2}.$$

\noindent By substitution, $\frac{\partial{n^{2}}\hat{\mathcal{M}}_k(\bar{b}^{})}{\partial \bar{b}^{}_{l}} + {{n^{2}}\hat{\mathcal{M}}_k}^{}(\bar{b}^{}) = 0$ implying that (\ref{eqn:optimality-condition}) holds true. Thus, $\bar{b}=b^{*}$.

\end{proof}
\color{black}


\section{High Voltage Electric Grid Application} \label{High Voltage Electric Grid Application}

An electrical grid is an interconnected network, consisting of transmission lines and buses, designed for the purpose of delivering power from producers to consumers. Power is delivered from generator buses to load buses via the transmission lines by way of alternating current, depicted by a sinusoidal curve. Electrical impedance is the measure of opposition that a transmission line presents to alternating current when a voltage is applied. Specifically, in the power grid, impedance of transmission lines are comprised of resistance and reactance. Buses where power is generated are called generator buses and have net positive power injections into the system whereas buses where power is consumed are called load buses and have net negative power injections into the system. 
 
Suppose we are considering a high voltage electric grid consisting of $n$ buses and $m$ transmission lines. We may model the grid as a connected network, $G = (V,E)$, with $n$ nodes and $m$ edges where each node $i\in{V}$ corresponds to a voltage phase angle $\theta_{i} \in[-\pi, \pi)$, associated with a bus $i$, and evolves according the coupled dynamics~\cite{Guo,Strogatz,Hu} described in \eqref{eqn:coupledoscillator1} where $\omega_{i}$ is the per unit power injected at node $i$, and $\sum_{j=1}^{N}b_{i j} \sin (\theta_{i}-\theta_{j})$ is the per unit power extracted at node $i$.\footnote{Note that there would be a damping term that we assume to be $1$ second for now.} We note that $b_{i j}$ is the per unit susceptance along the transmission line that connects bus $i$ to bus $j$. Concretely, $b_{i j}$ is calculated by taking the reciprocal of the per unit reactance along the transmission line that connects bus $i$ to bus $j$ in the physical system, and intuitively describes how conductive the transmission line is.

As previously stated, the ability for voltage phase-oscillators participating in the system to maintain frequency synchronization is imperative to the health and functionality of the electric grid, and this ability is potentially threatened by the integration of renewable energy into the high voltage electric grid.Indeed, variability in renewable output cause small disturbances to the power injected into the system at certain buses \cite{Tyloo,Impram}. To mitigate these small disturbances, we leverage the model as a design tool for ensuring the grid is robust to small perturbations that are inevitable with integration of renewable energy. We consider two scenarios whose solutions amount to ensuring that a fixed amount of susceptance\footnote{Normalized to one, for simplicity, and informed by a fixed amount of physical resources.} is optimally allocated to the edges of a power grid topology to minimize the vulnerability measure at nodes where renewable energy is introduced. As indicated in \cite{cortes2014virtual,rodriguez2013control}, electronics that control susceptance values along the transmission lines are currently in development, which would allow for this optimization strategy to be realized. Moreover, a motivating factor for this study is to explore the potential of new technology; We hope that the efficacy demonstrated by our optimization framework may inspire further development of these types of control electronics for the high voltage electric grid.

\subsection{Data set Description} 
We consider a $57$ bus case system that is a high voltage electric grid model for the NY region \cite{liu}. The dataset consists of $29$ generator buses, $28$ load buses, and $94$ transmission lines. To establish a natural frequency corresponding to each voltage phase-oscillator participating in the system, we attain the per unit power injected information associated with each node during a cold morning in December $2019$. For all of the problem scenarios considered, following the discussion in Section \ref{Ensuring Synchronicity}, we set $\gamma = \frac{\pi}{16}$ \cite{reliabilityNERC2019}.

\subsection{Scenario 1} 

\begin{quote}
Suppose a power grid engineer is tasked with converting the energy source at a generator bus to a renewable energy source which will likely result in small perturbations to the power injected at that bus. Is there a generator bus that would be the most robust to the introduction of renewable energy? Does the choice of bus change given the ability to distribute a fixed amount of susceptance to the edges of the electric grid network?
\end{quote}

        
Let $b_0$ be the original susceptance values obtained from data and suppose $V'$ is the set of nodes corresponding to the twenty-nine generator buses in the system. Without the ability to distribute a fixed amount of susceptance to the edges of the electric grid network, a generator bus that would be the most robust to the introduction of renewable energy is a generator bus corresponding to a node that solves $\argmin_{k\in{V'}}\hat{\mathcal{M}}_{k}(b_0)$. This is a direct application of the work done in \cite{Tyloo}. Suppose $b_{k}^{*}\in{\mathcal{X}}$ is a susceptance value assignment that minimizes the vulnerability measure at node $k$, i.e, $b_{k}^{*} = \argmin_{b\in\mathcal{X}}\hat{\mathcal{M}}_{k}(b)$. Given the ability to allocate susceptance to the edges of the high voltage electric grid, a generator bus that would be the most robust to the introduction of renewable energy corresponds to a node that solves $\argmin_{k\in{V'}}\hat{\mathcal{M}}_{k}(b_{k}^{*})$. 

We computed $\hat{\mathcal{M}}_{k}(b_0)$ and $\hat{\mathcal{M}}_{k}(b_{k}^{*})$ for all twenty-nine nodes corresponding to generator buses, and as expected, $\hat{\mathcal{M}}_{k}(b_0)$ is greater than $\hat{\mathcal{M}}_{k}(b_{k}^{*})$ for all $k$. Comparing $\hat{\mathcal{M}}_k(b_{k}^{*})$ for each node $k$ corresponding to a generator bus, the generator bus indexed as node $6$ exhibits the smallest vulnerability measure, $\hat{\mathcal{M}}_6(b_{6}^{*})$, and the generator bus indexed as node $4$ exhibits the largest vulnerability measure, $\hat{\mathcal{M}}_4(b_{4}^{*})$, after applying the optimization framework. 

Thus, given the ability to allocate susceptance to the edges of the high voltage electric grid, the generator bus that would be the most robust to the introduction of renewable energy corresponds to the node indexed as $6$. Moreover, without the ability to distribute a fixed amount of susceptance to the edges of the electric grid network, the generator bus that would be the most robust to the introduction of renewable energy is indexed as node $15$. We note that $\hat{\mathcal{M}}_{15}(b_0)$ is $87.3\%$ larger than $\hat{\mathcal{M}}_6(b_{6}^{*})$.

Given that renewable energy resources are often concentrated in areas according to natural resources, it may be unreasonable to assume that all twenty-nine generator buses should be considered as candidates for the introduction of renewable energy under this particular scenario. Our framework accommodates this constraint. In fact, one can choose to solve $b_{k}^{*}$ for each of the $k$ nodes corresponding the generator buses where appropriate natural resources are available, instead of all $k$ nodes corresponding to twenty-nine generator buses, and solve for $\argmin_{k\in{V'}}\hat{\mathcal{M}}_{k}(b_{k}^{*})$. 
\color{black}

\begin{figure}[H]
\centerline{\includegraphics[width=\columnwidth]{Scenario1.pdf}}
\caption{In the bar plot, $\hat{\mathcal{M}}_{k}(b_0)$ for each of the twenty-nine nodes corresponding to generator buses, $k$, is plotted in blue, and $\hat{\mathcal{M}}_{k}(b_{k}^{*})$ for each of the twenty-nine nodes corresponding to generator buses is plotted in orange. Below the bar plot, the $57$ bus-case system is plotted twice. In the left most graph, edge weights correspond to the edge weight assignment for node $4$ after the application of our optimization framework, and in the right most graph, edge weights correspond to the edge weight assignment for node $6$ after the application of our optimization framework. Notably, $\hat{\mathcal{M}}_{4}(b_{4}^{*})\approx 36.42$ and $\hat{\mathcal{M}}_{6}(b_{6}^{*}) \approx 12.68$.}
\label{fig3}
\end{figure}

\subsection{Scenario 2} 
\begin{quote}
Generating power using renewable energy resources rather than fossil fuels reduces greenhouse gas emissions, and thus, helps address climate change~\cite{Nunez}. Incorporating renewable energy at all the generator buses in the system will, however, likely result in small perturbations to the power injection at all these nodes. Can we distribute a fixed amount of susceptance to the edges of the electric grid network in such a way that allows for the voltage phase-oscillators' synchronized frequencies to be robust to noise at any of the generator buses?
\end{quote}

This problem amounts to solving \eqref{eqn:main_problem_specified} for $b^{*}$ where $V'$ is the set of $29$ nodes corresponding to generator buses in the complex network. Note that $\sum_{k\in{V'}}\hat{\mathcal{M}}_{k}(b_0)\approx 5663.14 $ and  $\sum_{k\in{V'}} \hat{\mathcal{M}}_{k}(b^{*})\approx 2628.66$, amounting to approximately a $53.6\%$ decrease in the sum of vulnerability measures at nodes corresponding to generator buses after applying the optimization framework. As illustrated by the plot in Figure~\ref{fig4}, the vulnerability measure at each bus $k\in{V'}$ decreases after the optimization framework except for three generator buses indexed as node $7,12,$ and $15$. Here, we are minimizing the worst case vulnerability measure for nodes in $V'$, so it is interesting to note that $\hat{\mathcal{M}}_k\big(b^{*}\big)$ becomes smaller for nearly all $k\in{V'}$. We would like to explore this aspect further as a potential future direction.

Once again, recall that renewable energy resources are concentrated in areas according to natural resources, and so it may be unreasonable to include all $29$ nodes corresponding to generator buses into the vertex subset $V'$. Instead, we may choose nodes corresponding to generator buses where natural resources are available to include in our vertex subset $V'$ to accomodate for such a natural resource constraint.

\begin{figure}[H]
\centerline{\includegraphics[width=\columnwidth]{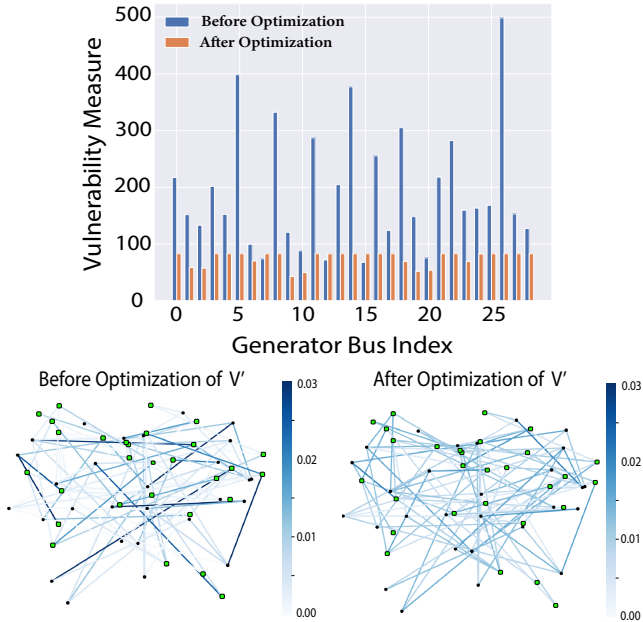}}
\caption{In the bar plot, , $\hat{\mathcal{M}}_{k}(b_0)$ for each of the twenty-nine nodes corresponding to generator buses, $k$, is plotted in blue, and $\hat{\mathcal{M}}_{k}(b^{*})$ for each of the twenty-nine nodes corresponding to generator buses is plotted in orange. Below the bar plot, once again, the $57$ bus-case system is plotted twice. In the left graph, edge weights correspond to original susceptance values, $b_0$, and in the right graph edge weights correspond to $b^{*}$.}
\label{fig4}
\end{figure}

\section{Conclusion}

In this work, we considered a small angle variation of the vulnerability measure derived in \cite{Tyloo} that quantifies how much a small perturbation to a phase-oscillator's natural frequency impacts the system's global synchronized frequencies. Given a fixed total amount of edge weight, we proposed a mathematical framework that assigns an optimal allocation of edge weights to minimize the vulnerability measure at node $k$, $\hat{\mathcal{M}}_{k}(b)$, or a function of vulnerability measures corresponding to a subset of nodes $V'$, $\mathcal{{F}}$, for which we expect small perturbations to occur. The model allows for flexibility in the choice of $\mathcal{{F}}$ contingent on the desired definition of robustness. In this work we specified $\mathcal{F}$ to produce edge weights that optimally minimize the worst case vulnerability measure of nodes in $V'$, $\max_{k\in{V'}}\hat{\mathcal{M}}_{k}(b)$. 

We proved that the vulnerability measure considered in this work is convex with respect to the edge weights of the network, implying that any edge weight assignment that results from the specified optimization problem is a global minimizer. Additionally, this work provided a tractable SDP reformulation of the problem and incorporated a constraint that ensures the existence of a synchronized stable solution with small angle differences. We shed light on the results of this optimization problem by considering the vulnerability of a single node from a graph theoretical and analytical lens. Finally, we applied the framework to high voltage electric grids, addressing two scenarios that highlight how the mathematical model may be leveraged to alleviate tensions between current green initiatives and the high voltage electric grids' capacity to accommodate such initiatives. 

There are many natural extensions to this work, both theoretical and applied in nature. One theoretical question to investigate is whether the vulnerability measure considered in this work is strictly convex with respect to the edge weights of a graph. If this property holds, this would imply that the solution obtained from the mathematical framework is a unique global minimizer. In Section~\ref{Analytical Solutions for Canonical Graphs}, we derived sufficient conditions for optimality when the vulnerability of one node is considered. It would be interesting to leverage these techniques to derive sufficient conditions for optimality when the vulnerability of a set of nodes is considered.

On the more applied side, recall that in establishing a natural frequency corresponding to each voltage phase-oscillator participating in the high voltage electric grid, we attained the per unit power injected information associated with a specific time. In reality, the high voltage electric grid is a highly dynamic system where the power injected at each bus varies in time. Thus, it would be informative to analyze how (if at all) the susceptance values assigned along the transmission lines vary in reference to time-series power injection data. If the susceptance values assigned along the transmission lines vary in reference to time-series power injection data, one could further quantify the variance and construct structures that minimize the variance of assigned susceptance values. Moreover, given that the high voltage electric grid is constantly growing in size, ensuring that the computational efficiency of the framework remains intact is important. One could potentially enhance the computational efficiency of this framework by taking advantage of the sparsity of $S_k$ discussed in Section~\ref{SDP Formulation}.

\appendices
\section{Proof of Proposition~\ref{prop:lower-bound-on-M}}\label{sec:proof-of-prop}
\begin{proof} 
Suppose $k\in{V}$ and $b\in{\mathcal{X}}$. In \cite{VanMieghem} and \cite{Gross}, the authors' show that, respectively, 

$$\hat{\mathcal{M}}_k(b) = \mathbb{L}^{\dagger}_{kk}\geq\frac{1}{\mathbb{L}^{}_{kk}} \left(1-\frac{1}{n}\right)^{2}\text{, and}$$

$$\lambda_2\geq\frac{1}{nD}$$

\noindent where $\mathbb{L}^{}_{kk}$ is the weighted degree of node $k$ and $D$ is the weighted diameter of the network being considered. Note that $\mathbb{L}^{}_{kk},D\leq{1}$ since $b^{T}\mathbf{1} = 1$, and so, 

$$\lambda_2{n}\geq{1}\geq{\mathbb{L}^{}_{kk}} \Leftrightarrow \frac{1}{\mathbb{L}^{}_{kk}} \geq \frac{1}{\lambda_{2}n}.$$

This means that 
$$\hat{\mathcal{M}}_k(b)\geq \frac{1}{\lambda_2} \left(1-\frac{1}{n}\right)^{2}$$
for all $k\in{V}$ when $b\in{\mathcal{X}}$, implying that for any ${V'}\subset{V}$,

$$\max_{k\in{V'}}\hat{\mathcal{M}}_k(b)\geq \frac{1}{\lambda_2} \left(1-\frac{1}{n}\right)^{2}.$$
\end{proof}
\color{black}

\section{Graph Theoretic Analysis Continued}\label{Graph Theoretic Interpretation Continued}
\begin{figure}[H]
\centerline{\includegraphics[width=\columnwidth]{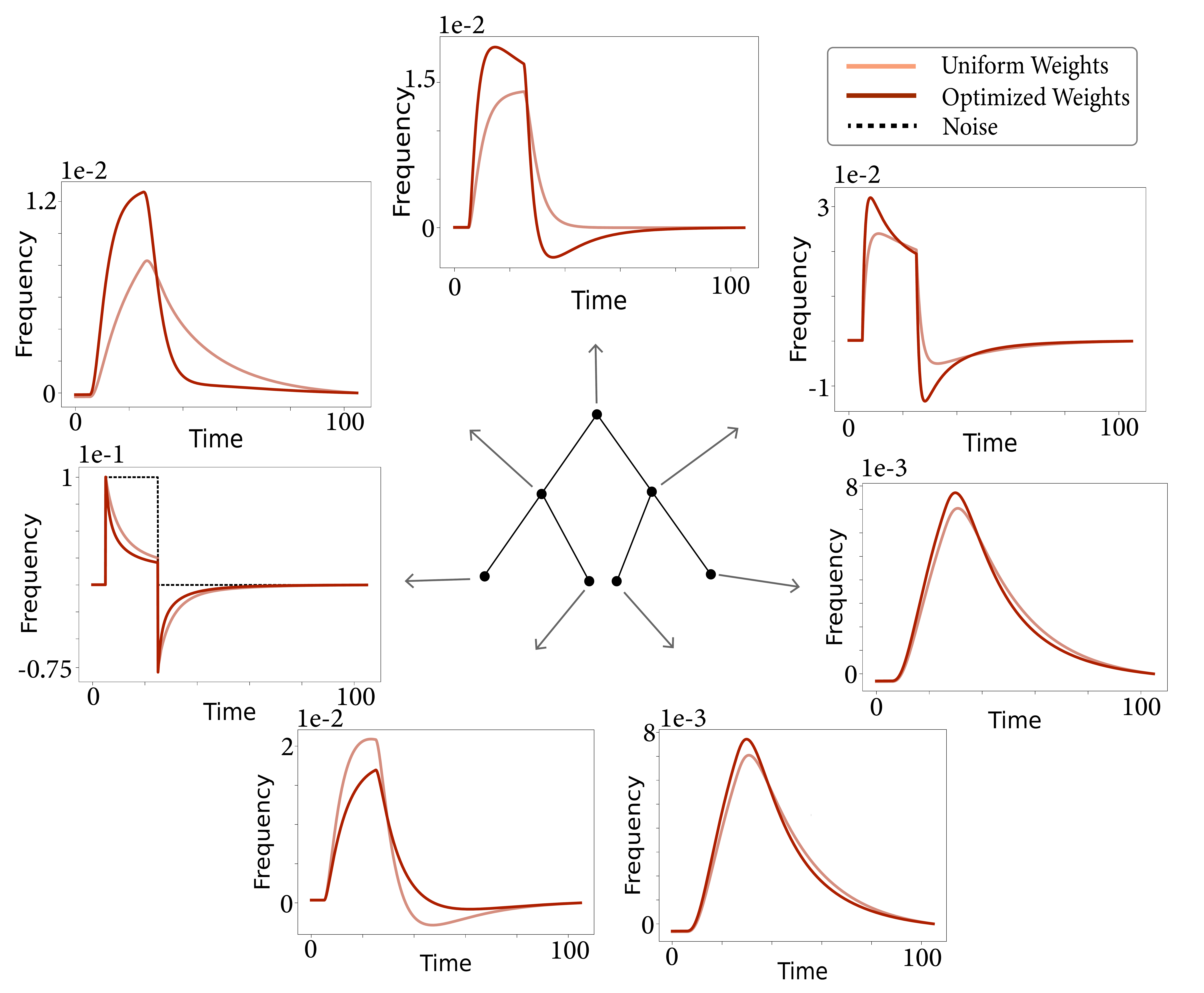}}
\caption{The node that is squared on $T_7$ in the center of this image is the node where we perturb the natural frequency for both the uniform edge weight case and optimized edge weight case. Each node on the graph has an associated arrow which points to a plot where the oscillators' frequencies over time for both cases, uniform and optimized edge weights are considered. For each of these plots, we consider time (seconds) on the $x$-axis and frequency (in a co-rotating frame) on the $y$-axis.}
\end{figure}

\section{Analytical Solutions for Canonical Graphs Continued}\label{Analytical Solutions for Canonical Continued}
\begin{lemma}\label{gradient_of_measure_property}
$\left(\nabla \hat{\mathcal{M}}_k^{}(b)\right)^{T} b=-\hat{\mathcal{M}}_k^{}(b)$ for $b\in{\mathcal{X}}$.
\end{lemma}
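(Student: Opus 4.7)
The plan is to recognize that $\hat{\mathcal{M}}_k$ is a positively homogeneous function of $b$ of degree $-1$ and then invoke Euler's identity for homogeneous functions. This immediately yields the claimed identity without any calculation involving the gradient entry-by-entry.

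First, I would observe that, by construction, each entry of $\mathbb{L}(b)$ is linear in the edge-weight vector $b$; hence $\mathbb{L}(tb) = t\,\mathbb{L}(b)$ for every scalar $t>0$. Using the spectral decomposition $\mathbb{L}(b) = V \Lambda V^T$ exhibited in the proof of Theorem~\ref{thm:convexity-of-M}, scaling $b$ by $t$ scales each eigenvalue by $t$ while leaving the eigenvectors unchanged. Since $(t\lambda_\alpha)^{-1} = t^{-1}\lambda_\alpha^{-1}$ for $\alpha \geq 2$ and the zero eigenvalue remains zero, this gives $\mathbb{L}^{\dagger}(tb) = t^{-1}\mathbb{L}^{\dagger}(b)$.

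Second, I would use the identity, established in the proof of Theorem~\ref{thm:convexity-of-M}, that
\[
    \hat{\mathcal{M}}_k(b) \;=\; e_k^T \mathbb{L}^{\dagger}(b)\, e_k.
\]
Combined with the previous step this yields $\hat{\mathcal{M}}_k(tb) = t^{-1}\hat{\mathcal{M}}_k(b)$ for all $t>0$, so $\hat{\mathcal{M}}_k$ is positively homogeneous of degree $-1$ on the convex cone of edge-weight vectors with connected graph. Differentiability on this cone is inherited from the differentiability of the inverse of $\mathbb{L}(b) + \mathbf{1}\mathbf{1}^T/n$, which is positive definite for $b$ sufficiently near $\mathcal{X}$.

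Third, I would apply Euler's homogeneous function theorem: differentiating $\hat{\mathcal{M}}_k(tb) = t^{-1}\hat{\mathcal{M}}_k(b)$ with respect to $t$ and evaluating at $t=1$ gives $\left(\nabla \hat{\mathcal{M}}_k(b)\right)^T b = -\hat{\mathcal{M}}_k(b)$, which is exactly the claim. There is no real obstacle in this proof; the only point requiring care is justifying the scaling property of $\mathbb{L}^{\dagger}$, which follows transparently from the eigendecomposition used earlier in the paper.
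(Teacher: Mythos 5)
Your proposal is correct and follows essentially the same route as the paper: both proofs establish that $\hat{\mathcal{M}}_k(cb) = c^{-1}\hat{\mathcal{M}}_k(b)$ and then differentiate at $c=1$ (Euler's identity for degree $-1$ homogeneity). The only difference is cosmetic --- you derive the homogeneity from the linearity of $\mathbb{L}(b)$ in $b$ and the spectral form of $\mathbb{L}^{\dagger}$, whereas the paper obtains it by citing the scaling law $\Omega_{ij}(cb) = \Omega_{ij}(b)/c$ for effective resistance.
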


\begin{proof} For any $(i,j)\in{E}$ and any $c>0$ it is shown in \cite{Ghosh} that the effective resistance satisfies, $$\Omega_{i j}(cb) = \frac{\Omega_{i j}(b)}{c}.$$ 

\noindent
From this and the definition of $\hat{\mathcal{M}}_k^{}\big(b\big)$, it can easily be verified that 
\begin{equation}\label{measure_homogenous_degree_1}
    \hat{\mathcal{M}}_k^{}\big(cb\big)=\frac{1}{c} \hat{\mathcal{M}}_k^{}\big(b\big). 
\end{equation}

\noindent 
By differentiating both sides of~\eqref{measure_homogenous_degree_1} with respect to $c$ and then, setting $c = 1$, we obtain
$$\left(\nabla \hat{\mathcal{M}}_k^{}(b)\right)^{T} b=-\hat{\mathcal{M}}_k^{}(b).$$
\end{proof}

\color{black}


\section*{Acknowledgment} S. V. Nagpal and G. Nair would like to thank a few members from the Center for Applied Mathematics community at Cornell University for helpful conversations at varying stages in this work: Steve Strogatz, Maximilian Ruth, Shawn Ong, Misha Padidar, and Zachary Frangella. S. V. Nagpal and C. L. Anderson would like to acknowledge the Cornell Atkinson Center for Sustainability and the Cornell Energy Systems Institute funds for support. Finally, S. V. Nagpal would like to acknowledge the NSF Research Training Group Grant: Dynamics, Probability, and PDEs in Pure and Applied Mathematics, DMS-1645643 for partially funding this work.

\bibliographystyle{IEEEtran}
\bibliography{master}

\begin{thebibliography}{10}
\providecommand{\url}[1]{#1}
\csname url@samestyle\endcsname
\providecommand{\newblock}{\relax}
\providecommand{\bibinfo}[2]{#2}
\providecommand{\BIBentrySTDinterwordspacing}{\spaceskip=0pt\relax}
\providecommand{\BIBentryALTinterwordstretchfactor}{4}
\providecommand{\BIBentryALTinterwordspacing}{\spaceskip=\fontdimen2\font plus
\BIBentryALTinterwordstretchfactor\fontdimen3\font minus
  \fontdimen4\font\relax}
\providecommand{\BIBforeignlanguage}[2]{{%
\expandafter\ifx\csname l@#1\endcsname\relax
\typeout{** WARNING: IEEEtran.bst: No hyphenation pattern has been}%
\typeout{** loaded for the language `#1'. Using the pattern for}%
\typeout{** the default language instead.}%
\else
\language=\csname l@#1\endcsname
\fi
#2}}
\providecommand{\BIBdecl}{\relax}
\BIBdecl

\bibitem{Filatrella}
G.~Filatrella, A.~H. Nielsen, and N.~F. Pedersen, ``Analysis of a power grid
  using a kuramoto-like model,'' \emph{The European Physical Journal B},
  vol.~61, no.~4, pp. 485--491, 2008.

\bibitem{Tyloo2}
M.~Tyloo and P.~Jacquod, ``Global robustness versus local vulnerabilities in
  complex synchronous networks,'' \emph{Physical Review E}, vol. 100, no.~3, p.
  032303, 2019.

\bibitem{Dorfler2}
F.~D{\"o}rfler and F.~Bullo, ``Exploring synchronization in complex oscillator
  networks,'' in \emph{2012 IEEE 51st IEEE Conference on Decision and Control
  (CDC)}.\hskip 1em plus 0.5em minus 0.4em\relax IEEE, 2012, pp. 7157--7170.

\bibitem{Tyloo}
M.~Tyloo, L.~Pagnier, and P.~Jacquod, ``The key player problem in complex
  oscillator networks and electric power grids: Resistance centralities
  identify local vulnerabilities,'' \emph{Science advances}, vol.~5, no.~11, p.
  eaaw8359, 2019.

\bibitem{Guo}
Y.~Guo, D.~Zhang, Z.~Li, Q.~Wang, and D.~Yu, ``Overviews on the applications of
  the kuramoto model in modern power system analysis,'' \emph{International
  Journal of Electrical Power \& Energy Systems}, vol. 129, p. 106804, 2021.

\bibitem{Strogatz}
S.~Strogatz, \emph{Sync: The emerging science of spontaneous order}.\hskip 1em
  plus 0.5em minus 0.4em\relax Penguin UK, 2004.

\bibitem{Hu}
X.~Hu, S.~Boccaletti, W.~Huang, X.~Zhang, Z.~Liu, S.~Guan, and C.-H. Lai,
  ``Exact solution for first-order synchronization transition in a generalized
  kuramoto model,'' \emph{Scientific reports}, vol.~4, no.~1, pp. 1--6, 2014.

\bibitem{Engel}
\BIBentryALTinterwordspacing
J.~Engel, ``Biden blueprint calls for wind and solar to power 90\% of u.s. grid
  by 2050,'' \emph{Renewable Energy World}, 2021. [Online]. Available:
  \url{https://www.renewableenergyworld.com/solar/biden-blueprint-calls-for-wind-and-solar-to-power-90-of-u-s-grid-by-2050/}
\BIBentrySTDinterwordspacing

\bibitem{Impram}
S.~Impram, S.~{Varbak Nese}, and B.~Oral, ``Challenges of renewable energy
  penetration on power system flexibility: A survey,'' \emph{Energy Strategy
  Reviews}, vol.~31, no.~1, p. 100539, 2020.

\bibitem{Penrod}
\BIBentryALTinterwordspacing
E.~Penrod, ``Biden proposes more than \$2b for clean energy infrastructure,
  \$14b+ increase in climate spending,'' \emph{Utility Drive}, 2021. [Online].
  Available:
  \url{https://www.utilitydive.com/news/biden-proposes-more-than-2b-for-clean-energy-infrastructure-14b-increase/}
\BIBentrySTDinterwordspacing

\bibitem{Tyloo3}
M.~Tyloo, T.~Coletta, and P.~Jacquod, ``Robustness of synchrony in complex
  networks and generalized kirchhoff indices,'' \emph{Physical Review Letters},
  vol. 120, no.~8, p. 084101, 2018.

\bibitem{Fazlyab}
M.~Fazlyab, F.~D{ö}rfler, and V.~M. Preciado, ``Optimal network design for
  synchronization of coupled oscillators,'' \emph{Automatica}, vol.~84, pp.
  181--189, 2017.

\bibitem{Donetti}
L.~Donetti, P.~I. Hurtado, and M.~A. Mu\~noz, ``Entangled networks,
  synchronization, and optimal network topology,'' \emph{Physical Review
  Letters}, vol.~95, p. 188701, Oct 2005.

\bibitem{Pecora}
L.~M. Pecora and T.~L. Carroll, ``Master stability functions for synchronized
  coupled systems,'' \emph{Phys. Rev. Lett.}, vol.~80, pp. 2109--2112, Mar
  1998.

\bibitem{Kempton}
L.~Kempton, G.~Herrmann, and M.~d. Bernardo, ``Self-organization of weighted
  networks for optimal synchronizability,'' \emph{IEEE Transactions on Control
  of Network Systems}, vol.~5, no.~4, pp. 1541--1550, 2018.

\bibitem{Hong}
H.~Hong, B.~J. Kim, M.~Y. Choi, and H.~Park, ``Factors that predict better
  synchronizability on complex networks,'' \emph{Physical Review E}, vol.~69,
  p. 067105, Jun 2004.

\bibitem{Ghosh}
A.~Ghosh, S.~Boyd, and A.~Saberi, ``Minimizing effective resistance of a
  graph,'' \emph{SIAM review}, vol.~50, no.~1, pp. 37--66, 2008.

\bibitem{Gharan}
S.~Gharan, ``Recent advances in approximation algorithms: Effective resistance
  and simple random walks,'' 2015.

\bibitem{VanMieghem}
P.~Van~Mieghem, K.~Devriendt, and H.~Cetinay, ``Pseudoinverse of the laplacian
  and best spreader node in a network,'' \emph{Physical Review E}, vol.~96,
  no.~3, p. 032311, 2017.

\bibitem{ellens2011effective}
W.~Ellens, F.~M. Spieksma, P.~Van~Mieghem, A.~Jamakovic, and R.~E. Kooij,
  ``Effective graph resistance,'' \emph{Linear algebra and its applications},
  vol. 435, no.~10, pp. 2491--2506, 2011.

\bibitem{Boyd}
S.~Boyd and L.~Vandenberghe, \emph{Convex Optimization}.\hskip 1em plus 0.5em
  minus 0.4em\relax Cambridge university press, 2004.

\bibitem{CVXPY}
S.~Diamond and S.~Boyd, ``Cvxpy: A python-embedded modeling language for convex
  optimization,'' \emph{The Journal of Machine Learning Research}, vol.~17,
  no.~1, pp. 2909--2913, 2016.

\bibitem{Bullo}
F.~Bullo, \emph{Lectures on network systems}.\hskip 1em plus 0.5em minus
  0.4em\relax Kindle Direct Publishing, 2019.

\bibitem{cortes2014virtual}
P.~R. Cort{\'e}s, J.~I.~C. Garcia, J.~R. Delgado, and R.~Teodorescu, ``Virtual
  admittance controller based on static power converters,'' Feb.~20 2014,
  ~United States Patent App. 14/001,850.

\bibitem{rodriguez2013control}
P.~Rodriguez, I.~Candela, C.~Citro, J.~Rocabert, and A.~Luna, ``Control of
  grid-connected power converters based on a virtual admittance control loop,''
  in \emph{2013 15th European Conference on Power Electronics and Applications
  (EPE)}.\hskip 1em plus 0.5em minus 0.4em\relax IEEE, 2013, pp. 1--10.

\bibitem{liu}
M.~V. Liu, B.~Yuan, Z.~Wang, J.~A. Sward, K.~M. Zhang, and C.~L. Anderson, ``An
  open source representation for the nys electric grid to support power grid
  and market transition studies,'' \emph{arXiv:2112.06756}, 2021.

\bibitem{reliabilityNERC2019}
``Reliability guideline: Improvements to interconnection requirements for
  {BPS}-connected inverter-based resources,'' NERC, 2019.

\bibitem{Nunez}
\BIBentryALTinterwordspacing
C.~Nunez, ``Renewable energy explained,'' \emph{National Geographic}, 2020.
  [Online]. Available:
  \url{https://www.nationalgeographic.org/article/renewable-energy-explained/}
\BIBentrySTDinterwordspacing

\bibitem{Gross}
J.~L. Gross, J.~Yellen, and P.~Zhang, \emph{Handbook of Graph Theory, Second
  Edition}.\hskip 1em plus 0.5em minus 0.4em\relax Chapman \& Hall/CRC, 2013.

\end{thebibliography}

\end{document}